\newtheorem{theorem}[equation]{Theorem}
\newtheorem{lemma}[equation]{Lemma}
\newtheorem{corollary}[equation]{Corollary}
\newtheorem{proposition}[equation]{Proposition}
\numberwithin{equation}{section}
\begin{document}

\title[On the integrality of hypergeometric series]{On the integrality of hypergeometric series whose coefficients are factorial ratios}
\author{Alan Adolphson}
\address{Department of Mathematics\\
Oklahoma State University\\
Stillwater, Oklahoma 74078}
\email{adolphs@math.okstate.edu}
\author{Steven Sperber}
\address{School of Mathematics\\
University of Minnesota\\
Minneapolis, Minnesota 55455}
\email{sperber@math.umn.edu}
\date{\today}
\keywords{}
\subjclass{}
\begin{abstract}
We use the Dwork-Frobenius operator to prove an integrality result for $A$-hypergeometric series whose coefficients are factorial ratios.  As a special case, we generalize one direction of a classical result of Landau on the integrality of factorial ratios.
\end{abstract}
\maketitle

\section{Introduction}

Integrality properties for the coefficients of hypergeometric series appear in the literature
as early as 1900.  Specifically, Landau\cite{L} considered certain ratios of factorials (which can be viewed as coefficients of a suitable hypergeometric series: see Example~2 below) and gave a criterion for these ratios to be integral. 

In subsequent work Dwork\cite{D1,D2} (and then a number of other researchers focussing on the $p$-adic theory of differential equations, including Christol\cite{C} and the authors\cite{AS1, AS3}) was concerned with classifying hypergeometric series whose coefficients are $p$-integral. Such issues are in a sense more general than Landau's work and also have a different motivation. For Dwork, the key analytic property of the Frobenius map, say, on relative cohomology of a family of varieties defined over a finite field, is its analytic continuation to a $p$-adic lift of the Hasse domain (and even into the supersingular disks:\cite{D1,D3}). Once established, Frobenius acts on the solutions of the associated $p$-adic differential equation and important arithmetic information can be extracted.  In an early attempt to systematize an approach to proving analytic continuation to this large region, Dwork undertook a very detailed examination, finding precise $p$-adic estimates for ratios of  ``binomial-type numbers'', i.~e.,  ratios of rising Pochhammer factorials of the sort appearing in hypergeometric series. These ``formal congruences'' were used in an essential manner by Delaygue \cite{De1,De2}, Delaygue-Rivoal-Roques\cite{DRR}, and Krattenthaler-Rivoal\cite{KR} in their proofs of the integrality of the mirror map in cases of hypergeometric differential equations having a series solution with integral coefficients and having as well a log solution. 

In \cite{D1.5,D3} Dwork developed other methods for proving the analytic continuation and integrality.  He applied his construction of a Frobenius action on the solution matrix of the deformation equation associated to a family of varieties to conclude that certain distinguished solutions were stable under Frobenius. Using the strong estimates of Frobenius and the classical fixed point theorem for contraction mappings on a ($p$-adic) Banach space he could then prove that important ratios of hypergeometric functions had analytic continuation and the series themselves had $p$-integral coefficients. 

In general, in this setting, the Frobenius action on solutions in a singular disk can be used to obtain arithmetic information at smooth fibers. The information in question concerns the $p$-adic size of roots of the zeta or $L$-function at non-singular fibers and $p$-adic formulas for the roots themselves. In particular, the eigenvalues of minimal $p$-divisibility often can be expressed in terms of the analytic continuation of a certain ratio of the special function solution in the singular disk (for some recent results of this type see \cite{AS2}, Yu\cite{Y}, Zhang\cite{Z}). Even the Hasse invariant itself can be expressed in terms of a ``ratio'' of  matrices of suitable series solutions at a singular point.  An essential ingredient in so doing is knowing the coefficients of the series solutions under consideration are themselves $p$-integral. We expect the $p$-integrality results derived here to enable us in future work to establish $p$-adic formulas for Hasse invariants and to extend some known integrality results for mirror maps. 

In previous articles (\cite{AS1,AS3}) we dealt with confluent and non-confluent cases and used a recursive truncation method to obtain $p$-integrality results. Here we proceed somewhat differently.   We directly identify certain series whose coefficients are themselves hypergeometric series as eigenvectors of the Dwork-Frobenius operator.  If the corresponding eigenvalues are not too highly divisible by $p$ then the coefficients of these hypergeometric series are all $p$-integral.  Our hypothesis guarantees that this holds for all primes $p$, so in fact these hypergeometric series have integral coefficients (see Theorem~2.10).  In particular, we avoid computing the full solution matrix of the deformation equation and avoid finding $p$-adic estimates for the full Dwork-Frobenius matrix.
 
Our results here should play an essential role in giving a $p$-adic interpolation of the generalized Hasse invariant (see \cite{AS2.5}) in terms of a matrix of special functions.  We also indicate in the examples how our results give an extension of Landau's classical theorem on the integrality of factorial ratios.

\section{Results}

We describe the hypergeometric series we are considering.  Let $A = \{{\bf a}_j\}_{j=1}^N$ with ${\bf a}_j = (a_{1j},a_{2j},\dots,a_{n-1,j},1)\in{\mathbb Z}^n$ for all $j$.  Let $A' = \{{\bf a}_j\}_{j=1}^M\subseteq A$ and put
$\beta = \sum_{j=1}^M{\bf a}_j$.  Let $C(A)\subseteq{\mathbb R}^n$ be the real cone generated by $A$ and let $\sigma_\beta$ be the smallest closed face of $C(A)$ containing $\beta$.  Let $\sigma_\beta^\circ$ be the relative interior of $\sigma_\beta$, i.~e., $\sigma_\beta^\circ$ equals $\sigma_\beta$ minus all its proper closed subfaces.  Then $\beta\in\sigma_\beta^\circ$.  We make the assumption throughout this paper that $A'$ is {\it minimal for $\sigma_\beta^\circ$\/}, i.~e., that if $J$ is a proper subset of $\{1,\dots,M\}$, then $\sum_{j\in J} {\bf a}_j\not\in\sigma_\beta^{\circ}$.  

Let ${\mathbb Z}A\subseteq{\mathbb Z}^n$ be the abelian group generated by $A$, let ${\mathbb N}$ be the nonnegative integers, and let ${\mathbb N}A$ be the semigroup generated by $A$.  Put ${\mathcal M}_\beta = (-\sigma_\beta^\circ)\cap{\mathbb Z}A$.  
\begin{lemma}
Assume that $A'$ is minimal for $\sigma_\beta^\circ$, let $u\in {\mathcal M}_\beta$, and suppose that
$u = \sum_{j=1}^N l_j{\bf a}_j$ with $l_j\in{\mathbb Z}$ for $j=1,\dots,M$ and $l_j\in{\mathbb N}$ for $j=M+1,\dots,N$.  Then $l_j<0$ for $j=1,\dots,M$ and $l_j=0$  for all $j\in\{M+1,\dots,N\}$ for which ${\bf a}_j\not\in\sigma_\beta$.  
\end{lemma}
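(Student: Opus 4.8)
The plan is to recast the hypotheses in terms of supporting linear functionals of the cones $C(A)$ and $\sigma_\beta$, and to handle the two conclusions separately, bringing in the minimality of $A'$ only for the statement about the indices $j\le M$. Two elementary facts about faces of polyhedral cones underlie everything. First, since $\beta=\sum_{j=1}^M\mathbf{a}_j$ lies in the face $\sigma_\beta$ of $C(A)$ and this exhibits $\beta$ as a non-negative combination of $\mathbf{a}_1,\dots,\mathbf{a}_M$, the defining property of a face forces $\mathbf{a}_j\in\sigma_\beta$ for every $j\le M$. Second, $\sigma_\beta$ is a pointed polyhedral cone, so for $p\in\sigma_\beta$ one has $p\in\sigma_\beta^\circ$ if and only if $\psi(p)>0$ for every linear functional $\psi$ on $\mathbb{R}^n$ that is non-negative on $\sigma_\beta$ and whose kernel meets $\sigma_\beta$ in a facet; I will call these the facet functionals of $\sigma_\beta$. (If $\sigma_\beta=\{0\}$, then $M=0$ and the assertions are trivial, so I may assume $\dim\sigma_\beta\ge 1$.)

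For the vanishing statement, I would choose a supporting functional $\phi$ of $C(A)$ with $C(A)\cap\ker\phi=\sigma_\beta$, so that $\phi(\mathbf{a}_j)\ge 0$ for all $j$, with equality exactly when $\mathbf{a}_j\in\sigma_\beta$. Since $-u\in\sigma_\beta\subseteq\ker\phi$, we have $\phi(u)=0$, and evaluating $\phi$ on $u=\sum_{j=1}^N l_j\mathbf{a}_j$ annihilates every term with $\mathbf{a}_j\in\sigma_\beta$, leaving $\sum_{j:\,\mathbf{a}_j\notin\sigma_\beta}l_j\,\phi(\mathbf{a}_j)=0$. By the first fact, every index occurring here has $j>M$, hence $l_j\in\mathbb{N}$, while $\phi(\mathbf{a}_j)>0$; a vanishing sum of non-negative terms then forces $l_j=0$ for each such $j$. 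This gives the second assertion, and from now on $u=\sum_{j:\,\mathbf{a}_j\in\sigma_\beta}l_j\mathbf{a}_j$.

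For the sign statement, I would set $G=\{j\le M : l_j<0\}$ and $q=\sum_{j\in G}\mathbf{a}_j$, which lies in $\sigma_\beta$ by the first fact; the crux is to show $q\in\sigma_\beta^\circ$. Fix a facet functional $\psi$ of $\sigma_\beta$. Then $\psi(-u)>0$ because $-u\in\sigma_\beta^\circ$, while $\psi(-u)=\sum_{j:\,\mathbf{a}_j\in\sigma_\beta}(-l_j)\psi(\mathbf{a}_j)$. Each term with $j>M$, and each term with $j\le M$ and $l_j\ge 0$, has $-l_j\le 0$ and $\psi(\mathbf{a}_j)\ge 0$, hence is $\le 0$; discarding all of these gives $\sum_{j\in G}(-l_j)\psi(\mathbf{a}_j)>0$. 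Since $-l_j>0$ for $j\in G$ and $\psi(\mathbf{a}_j)\ge 0$, some $j'\in G$ has $\psi(\mathbf{a}_{j'})>0$, whence $\psi(q)\ge\psi(\mathbf{a}_{j'})>0$. As $\psi$ was arbitrary among facet functionals, $q\in\sigma_\beta^\circ$. Finally, suppose some $l_j\ge 0$ with $j\le M$; then $G$ is a proper subset of $\{1,\dots,M\}$, so $q\in\sigma_\beta^\circ$ contradicts the minimality of $A'$ for $\sigma_\beta^\circ$ (for $G=\emptyset$ this reads $0\in\sigma_\beta^\circ$, again contrary to that hypothesis). Hence $l_j<0$ for all $j=1,\dots,M$.

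The step I expect to carry the weight is the implication $-u\in\sigma_\beta^\circ\Rightarrow q\in\sigma_\beta^\circ$: the right idea is to test $-u$ not merely against the functional cutting out $\sigma_\beta$ but against every facet functional of $\sigma_\beta$, and then to keep the signs straight so that the generators of $\sigma_\beta$ carrying negative coefficients already produce an interior point of $\sigma_\beta$ — which minimality forbids unless those generators are all of $\mathbf{a}_1,\dots,\mathbf{a}_M$. The remaining difficulty is only bookkeeping: every inequality above must point the correct way, and it is easy to mishandle the terms with $j>M$, where $l_j\ge 0$ makes $-l_j\le 0$.
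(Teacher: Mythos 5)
Your proof is correct and rests on the same two main ideas as the paper's: a supporting linear form for the face $\sigma_\beta$ of $C(A)$ kills the coefficients $l_j$ for $j>M$ with ${\bf a}_j\notin\sigma_\beta$, and the minimality hypothesis then forces $G:=\{j\le M : l_j<0\}$ to equal $\{1,\dots,M\}$ once one knows $q=\sum_{j\in G}{\bf a}_j\in\sigma_\beta^\circ$. The one place you diverge is in proving $q\in\sigma_\beta^\circ$. The paper works inside the cone: subtracting from $u$ the contributions of all indices outside $G$ (each lying in $\sigma_\beta$) yields $\sum_{j\in G}l_j{\bf a}_j\in-\sigma_\beta^\circ$, and it then passes from this strictly positive combination of $\{{\bf a}_j\}_{j\in G}$ to the plain sum $q$, leaving implicit the fact that a positive combination of a set of generators and the plain sum of the same generators lie in the relative interior of one and the same minimal face. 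You argue dually, testing $q$ against each facet functional $\psi$ of $\sigma_\beta$: from $\psi(-u)>0$, discarding the nonpositive terms forces some $\psi({\bf a}_{j'})>0$ with $j'\in G$, so $\psi(q)>0$, and hence $q\in\sigma_\beta^\circ$. Your dual formulation makes the rescaling step, which the paper's ``hence'' glides over, completely explicit, a genuine expository gain; the mathematical content of the two arguments is otherwise the same.
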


\begin{proof}
Let $H\subseteq{\mathbb R}^n$ be a hyperplane of support for the face $\sigma_\beta$.  Then $H$ is defined by a homogeneous linear equation $h=0$ with $h(\sigma_\beta) = 0$ and $h(x)>0$ for $x\in C(A)\setminus\sigma_\beta$.  If $l_j>0$ for some $j\in\{M+1,\dots,N\}$ with ${\bf a}_j\not\in\sigma_\beta$, then $h(u)>0$, contradicting the hypothesis that $u\in{\mathcal M}_\beta$.  We then have 
\[ \sum_{\substack{j\in\{1,\dots,M\}\\ l_j<0}} l_j{\bf a}_j = u-\sum_{\substack{j\in\{1,\dots,M\}\\ l_j>0}}l_j{\bf a}_j - \sum_{j=M+1}^N l_j{\bf a}_j\in {\mathcal M}_\beta, \]
hence $\sum_{\substack{j\in\{1,\dots,M\}\\ l_j<0}} {\bf a}_j\in{\mathcal M}_\beta$.  The assumption that $A'$ is minimal for $\sigma_\beta^\circ$ now implies that $l_j<0$ for $j=1,\dots,M$.
\end{proof}

Put 
\[ E_\beta = \{ l=(l_1,\dots,l_N)\in{\mathbb N}^N\mid \text{$l_j=0$ if ${\bf a}_j\not\in\sigma_\beta$}\}. \]
For $u\in{\mathcal M}_\beta$ define
\begin{multline}
 F_u(\Lambda_1,\dots,\Lambda_N) = \\ 
\sum_{\substack{l\in E_\beta\\ \sum_{j=1}^M (-l_j-1){\bf a}_j +\\ \sum_{j=M+1}^N l_j{\bf a}_j= u}} 
(-1)^{\sum_{j=1}^M l_j}\frac{\prod_{j=1}^M l_j!}{\prod_{j=M+1}^N l_j!} \Lambda_1^{-l_1-1}\cdots\Lambda_M^{-l_M-1}\Lambda_{M+1}^{l_{M+1}}\cdots\Lambda_N^{l_N}. 
\end{multline}
We show that the assumption that $A'$ be minimal for $\sigma_\beta^\circ$ implies that the $F_u(\Lambda)$ satisfy $A$-hypergeometric systems.

Let $L$ be the lattice of relations on the set $A$:
\[ L = \bigg\{ l=(l_1,\dots,l_N)\in{\mathbb Z}^N\mid \sum_{j=1}^N l_j{\bf a}_j = {\bf 0}\bigg\}. \]
For $l\in L$ define the box operator $\Box_l$ by
\begin{equation}
\Box_l = \prod_{l_j>0} \bigg(\frac{\partial}{\partial\Lambda_j}\bigg)^{l_j} - \prod_{l_j<0} \bigg(\frac{\partial}{\partial \Lambda_j}\bigg)^{-l_j}.
\end{equation}
The Euler operators for a parameter $u=(u_1,\dots,u_n)\in {\mathbb C}^n$ are defined by 
\begin{equation}
Z_i = \sum_{j=1}^N a_{ij}\Lambda_j\frac{\partial}{\partial\Lambda_j} - u_i
\end{equation}
for $i=1,\dots,n$.  The {\it $A$-hypergeometric system with parameter $u$\/} is the system of partial differential equations consisting of the box operators $\Box_l$ for $l\in L$ and the $Z_i$ for $i=1,\dots,n$.

\begin{lemma}
Suppose that $A'$ is minimal for $\sigma_\beta^\circ$ and let $u\in{\mathcal M}_\beta$.  Then
\[ \frac{\partial}{\partial\Lambda_k}F_u(\Lambda) = \begin{cases} F_{u-{\bf a}_k}(\Lambda) & \text{if ${\bf a}_k\in\sigma_\beta$,} \\ 0 & \text{if ${\bf a}_k\not\in\sigma_\beta$.} \end{cases} \]
\end{lemma}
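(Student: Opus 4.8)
The plan is to differentiate the series $F_u(\Lambda)$ term by term and then reindex. I would first dispose of the case $\mathbf{a}_k\notin\sigma_\beta$. Choosing, as in the proof of the first lemma of this section, a linear form $h$ that vanishes on $\sigma_\beta$, is nonnegative on $C(A)$, and is strictly positive on $C(A)\setminus\sigma_\beta$, and evaluating it on $\beta=\sum_{j=1}^M\mathbf{a}_j$, one gets $0=\sum_{j=1}^M h(\mathbf{a}_j)$ with every summand $\ge 0$; hence $h(\mathbf{a}_j)=0$, i.e. $\mathbf{a}_j\in\sigma_\beta$, for every $j\le M$. Thus $\mathbf{a}_k\notin\sigma_\beta$ forces $k>M$, and then $l_k=0$ for every $l\in E_\beta$, so $\Lambda_k$ does not occur in $F_u$ and $\partial F_u/\partial\Lambda_k=0$.

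Now suppose $\mathbf{a}_k\in\sigma_\beta$. I would note first that $u-\mathbf{a}_k\in\mathcal{M}_\beta$: we have $-u\in\sigma_\beta^\circ$ while $\mathbf{a}_k\in\sigma_\beta$, and adding a point of a convex cone to a point of its relative interior yields a point of the relative interior, so $-(u-\mathbf{a}_k)=-u+\mathbf{a}_k\in\sigma_\beta^\circ$; also $u-\mathbf{a}_k\in\mathbb{Z}A$. Differentiating termwise, I would distinguish $k\le M$ from $k>M$. If $k\le M$, the exponent of $\Lambda_k$ in the term of $F_u$ indexed by $l$ is $-l_k-1\le -1$, and $\partial/\partial\Lambda_k$ multiplies that term by $-l_k-1$ and replaces its monomial by the one indexed by $l'=l+e_k$, where $e_k$ is the $k$-th coordinate vector and $-l_k-2=-(l_k+1)-1$; a short computation shows the scalar $-l_k-1=-(l_k+1)$ is precisely the ratio of the coefficient of $F_{u-\mathbf{a}_k}$ at $l'$ to that of $F_u$ at $l$, the sign producing the change $(-1)^{\sum_{j\le M}l_j}\mapsto(-1)^{\sum_{j\le M}l_j+1}$ and the factor $l_k+1$ producing the extra factor in $\prod_{j\le M}l_j'!$. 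If $k>M$, the exponent of $\Lambda_k$ is $l_k\ge 0$, so terms with $l_k=0$ vanish, and for $l_k\ge 1$ the operator multiplies the term by $l_k$ and replaces its monomial by the one indexed by $l'=l-e_k$, the factor $l_k$ now supplying the factor lost from $\prod_{j>M}l_j'!$. In either subcase one checks that $l\mapsto l'$ carries the index set of $F_u$ into $E_\beta$ and transforms the relation $\sum_{j\le M}(-l_j-1)\mathbf{a}_j+\sum_{j>M}l_j\mathbf{a}_j=u$ into the one defining $F_{u-\mathbf{a}_k}$.

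The maps $l\mapsto l\pm e_k$ are plainly injective, so the remaining point is their surjectivity onto the index set of $F_{u-\mathbf{a}_k}$, and this is the only step that uses the minimality of $A'$. For $k>M$ surjectivity is immediate: every valid index $l'$ of $F_{u-\mathbf{a}_k}$ equals $l-e_k$ for the valid index $l=l'+e_k$ of $F_u$, which has $l_k\ge 1$. For $k\le M$, the image of $l\mapsto l+e_k$ is the set of indices $l'$ with $l_k'\ge 1$, so I must rule out an index $l'\in E_\beta$ of $F_{u-\mathbf{a}_k}$ with $l_k'=0$. But for such an $l'$ the term $(-l_k'-1)\mathbf{a}_k$ on the left of $\sum_{j\le M}(-l_j'-1)\mathbf{a}_j+\sum_{j>M}l_j'\mathbf{a}_j=u-\mathbf{a}_k$ equals $-\mathbf{a}_k$, so after cancellation this relation rearranges to an expression $u=\sum_{j=1}^N m_j\mathbf{a}_j$ with $m_j\in\mathbb{Z}$ for $j\le M$, $m_j\in\mathbb{N}$ for $j>M$, and $m_k=0$, contradicting the first lemma of this section, which forces $m_j<0$ for all $j\le M$. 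Hence $\partial F_u/\partial\Lambda_k=F_{u-\mathbf{a}_k}$ in every case. I expect the sign-and-factorial bookkeeping in the reindexing to be the most error-prone part, but the substantive step, and the only place where the hypothesis on $A'$ is used, is this surjectivity check for $k\le M$.
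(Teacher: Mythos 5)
Your proof is correct and follows essentially the same route as the paper's: term-by-term differentiation, reindexing by $l\mapsto l\pm e_k$, and an appeal to Lemma~2.1 to establish surjectivity of the reindexing map in the case $k\le M$. The two small points you make explicit --- that $\mathbf{a}_k\notin\sigma_\beta$ forces $k>M$, and that $u-\mathbf{a}_k\in\mathcal{M}_\beta$ --- are left implicit in the paper but are genuinely needed, so spelling them out is a minor improvement rather than a departure.
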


\begin{proof}
If ${\bf a}_k\not\in\sigma_\beta$, then $\Lambda_k$ does not occur to a nonzero power in (2.2), so $\partial F_u/\partial\Lambda_k = 0$.  
A straightforward calculation from (2.2) shows that applying $\partial/\partial\Lambda_k$ to a term in $F_u(\Lambda)$ gives either 0 or a term of $F_{u-{\bf a}_k}(\Lambda)$.  The main point of the proof is to show that every monomial in $F_{u-{\bf a}_k}(\Lambda)$ is obtained by applying $\partial/\partial\Lambda_k$ to some monomial in $F_u(\Lambda)$.  

Suppose that ${\bf a}_k\in\sigma_\beta$ and consider a monomial in $F_{u-{\bf a}_k}(\Lambda)$:
\begin{equation}
(-1)^{\sum_{j=1}^M l_j}\frac{\prod_{j=1}^M l_j!}{\prod_{j=M+1}^N l_j!} \Lambda_1^{-l_1-1}\cdots\Lambda_M^{-l_M-1}\Lambda_{M+1}^{l_{M+1}}\cdots\Lambda_N^{l_N} 
\end{equation}
with $l\in E_\beta$ and 
\begin{equation}
\sum_{j=1}^M (-l_j-1){\bf a}_j +\sum_{j=M+1}^N l_j{\bf a}_j= u-{\bf a}_k.
\end{equation}  
Suppose first that $k\in\{1,\dots,M\}$.  Then (2.7) gives
\[ u = -l_k{\bf a}_k + \sum_{\substack{j=1\\ j\neq k}}^M (-l_j-1){\bf a}_j +\sum_{j=M+1}^N l_j{\bf a}_j. \]
By Lemma 2.1 we have $-l_k<0$, hence 
\[ (l_1,\dots,l_{k-1},l_{k}-1,l_{k+1},\dots,l_N)\in E_\beta, \]
the monomial
\[ (-1)^{-1-\sum_{j=1}^M l_j}\frac{(l_k-1)!\prod_{\substack{j=1\\ j\neq k}}^M l_j!}{\prod_{j=M+1}^N l_j!} \Lambda_k \cdot\big(\Lambda_1^{-l_1-1}\cdots\Lambda_M^{-l_M-1}\Lambda_{M+1}^{l_{M+1}}\cdots\Lambda_N^{l_N}\big) \]
appears in the series (2.2), and applying $\partial/\partial\Lambda_k$ gives (2.6).  Next suppose that $k\in\{M+1,\dots,N\}$.   Then the monomial 
\[ (-1)^{\sum_{j=1}^M l_j}\frac{\prod_{j=1}^M l_j!}{(l_k+1)!\prod_{\substack{j=M+1\\ j\neq k}}^N l_j!} 
 \Lambda_k \cdot\big(\Lambda_1^{-l_1-1}\cdots\Lambda_M^{-l_M-1}\Lambda_{M+1}^{l_{M+1}}\cdots\Lambda_N^{l_N}\big) \]
appears in the series (2.2) and applying $\partial/\partial\Lambda_k$ gives (2.6).
\end{proof}

\begin{corollary}
Suppose that $A'$ is minimal for $\sigma_\beta^\circ$ and that $u\in{\mathcal M}_\beta$.  Then $F_u(\Lambda)$ satisfies the $A$-hypergeometric system with parameter $u$.  
\end{corollary}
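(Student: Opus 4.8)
The $A$-hypergeometric system with parameter $u$ is built from two families of operators, the Euler operators $Z_i$ ($i=1,\dots,n$) and the box operators $\Box_l$ ($l\in L$), so the plan is simply to check that $F_u(\Lambda)$ is annihilated by each family. The Euler operators will be handled by a direct computation using the degree condition built into~(2.2); the box operators will follow by iterating the derivative formula of the preceding lemma, together with the support restriction coming from minimality.

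For a fixed $i$ and a monomial $\Lambda_1^{-l_1-1}\cdots\Lambda_M^{-l_M-1}\Lambda_{M+1}^{l_{M+1}}\cdots\Lambda_N^{l_N}$ occurring in $F_u(\Lambda)$, let $m_j$ denote the exponent of $\Lambda_j$, so that $m_j=-l_j-1$ for $j\le M$ and $m_j=l_j$ for $j>M$. The condition in~(2.2) says precisely that $\sum_{j=1}^N m_j\mathbf{a}_j=u$. Since $\Lambda_j(\partial/\partial\Lambda_j)$ scales this monomial by $m_j$, the operator $\sum_{j=1}^N a_{ij}\Lambda_j(\partial/\partial\Lambda_j)$ scales it by $\sum_{j=1}^N a_{ij}m_j$, which is the $i$-th coordinate of $\sum_{j=1}^N m_j\mathbf{a}_j=u$, namely $u_i$; hence $Z_i$ annihilates the monomial. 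As this holds term by term, $Z_iF_u(\Lambda)=0$, a routine verification.

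For the box operators, fix $l\in L$ and write $\Box_l=\prod_{l_j>0}(\partial/\partial\Lambda_j)^{l_j}-\prod_{l_j<0}(\partial/\partial\Lambda_j)^{-l_j}$. Suppose first that $l_j=0$ whenever $\mathbf{a}_j\notin\sigma_\beta$. I would then iterate the preceding lemma, the key point being that ${\mathcal M}_\beta$ is stable under subtracting any $\mathbf{a}_j\in\sigma_\beta$: if $v\in{\mathcal M}_\beta$ then $-(v-\mathbf{a}_j)=(-v)+\mathbf{a}_j$ is the sum of a point in the relative interior $\sigma_\beta^\circ$ and a point of $\sigma_\beta$, hence again lies in $\sigma_\beta^\circ$, while it still lies in $\mathbb{Z}A$. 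So the lemma applies at each step and yields $\prod_{l_j>0}(\partial/\partial\Lambda_j)^{l_j}F_u(\Lambda)=F_{u-\sum_{l_j>0}l_j\mathbf{a}_j}(\Lambda)$ and $\prod_{l_j<0}(\partial/\partial\Lambda_j)^{-l_j}F_u(\Lambda)=F_{u+\sum_{l_j<0}l_j\mathbf{a}_j}(\Lambda)$. Since $l\in L$ gives $\sum_{l_j>0}l_j\mathbf{a}_j=-\sum_{l_j<0}l_j\mathbf{a}_j$, the two subscripts coincide and $\Box_lF_u(\Lambda)=0$.

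The remaining case, $l_{j_0}\neq0$ for some $j_0$ with $\mathbf{a}_{j_0}\notin\sigma_\beta$, is the one spot needing a little care: it is not enough that one of the two products obviously vanishes, one has to see that both do. For this I would invoke the supporting hyperplane $h=0$ of $\sigma_\beta$ from the proof of Lemma~2.1 (so $h$ vanishes on $\sigma_\beta$ and is positive on $C(A)\setminus\sigma_\beta$); applying $h$ to $\sum_{j=1}^N l_j\mathbf{a}_j=\mathbf{0}$ forces $\sum_{j:\,\mathbf{a}_j\notin\sigma_\beta}l_jh(\mathbf{a}_j)=0$ with every $h(\mathbf{a}_j)>0$, so $l$ has both a strictly positive and a strictly negative coefficient among the indices with $\mathbf{a}_j\notin\sigma_\beta$. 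Any such index lies in $\{M+1,\dots,N\}$, because $\mathbf{a}_1+\cdots+\mathbf{a}_M=\beta\in\sigma_\beta$ forces each summand into the face $\sigma_\beta$; and there $l\in E_\beta$ makes the exponent of $\Lambda_j$ equal to $l_j=0$, so $\Lambda_j$ does not occur in $F_u(\Lambda)$ and $(\partial/\partial\Lambda_j)F_u(\Lambda)=0$. As the partials commute, each of the two products annihilates $F_u(\Lambda)$, so again $\Box_lF_u(\Lambda)=0$. Beyond this I anticipate no real difficulty: the mathematical content is already in the preceding lemma, and the minimality hypothesis re-enters only through the elementary supporting-hyperplane argument just used.
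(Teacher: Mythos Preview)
Your proof is correct and follows essentially the same approach as the paper's: verify the Euler operators term by term from the exponent condition in~(2.2), and for the box operators split according to whether $l$ has a nonzero entry at some index with $\mathbf{a}_j\notin\sigma_\beta$, using Lemma~2.5 (iterated) in one case and the vanishing of $\partial/\partial\Lambda_j$ in the other. You are more explicit than the paper in two places---the supporting-hyperplane argument that forces both a positive and a negative such index, and the stability of $\mathcal{M}_\beta$ under subtraction of an $\mathbf{a}_j\in\sigma_\beta$---but these just spell out what the paper leaves as evident; in the second case you could also simply invoke Lemma~2.5 again rather than re-deriving the vanishing from the $E_\beta$ condition.
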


\begin{proof}
A straightforward calculation from (2.2) shows that for $u\in{\mathcal M}_\beta$ the series $F_u(\Lambda)$ satisfies the operators $Z_i$ for the parameter $u$.  Let $l\in L$.  Then
\begin{equation}
\sum_{l_j>0}l_j{\bf a}_j = -\sum_{l_j<0}l_j{\bf a}_j. 
\end{equation}
If $l_j>0$ for some ${\bf a}_j\not\in\sigma_\beta$, this equality implies that $l_{j'}<0$ for some ${\bf a}_{j'}\not\in\sigma_\beta$.  Then by Lemma 2.5
\[ \prod_{l_j>0} \bigg(\frac{\partial}{\partial\Lambda_j}\bigg)^{l_j}F_u(\Lambda) = 
 \prod_{l_j<0} \bigg(\frac{\partial}{\partial\Lambda_j}\bigg)^{-l_j}F_u(\Lambda) = 0, \]
so $\Box_lF_u(\Lambda)=0$.  Otherwise, Lemma 2.5 implies that
\[ \prod_{l_j>0} \bigg(\frac{\partial}{\partial\Lambda_j}\bigg)^{l_j}F_u(\Lambda) = F_{u-\sum_{l_j>0} l_j{\bf a}_j}(\Lambda) \]
and
\[  \prod_{l_j<0} \bigg(\frac{\partial}{\partial\Lambda_j}\bigg)^{-l_j}F_u(\Lambda) = F_{u+\sum_{l_j<0} l_j{\bf a}_j}(\Lambda). \]
By Equation~(2.9) these two expressions are equal, so again $\Box_lF_u(\Lambda)=0$.  
\end{proof}

In earlier work (\cite{AS1,AS3}) we considered the special case $F_{-\beta}(\Lambda)$ and gave a criterion for that series to have integral coefficients.  However, for certain arithmetic applications it is necessary to know that all the series (2.2) have integral coefficients.  For example, in \cite{AS2} we proved this in a special case by ad hoc methods and used the result to give a $p$-adic formula for the unit root of the zeta function of generalized Calabi-Yau hypersurfaces in characteristic $p$.  The point of this article is to give a general criterion that guarantees all the series (2.2) have integral coefficients.

\begin{theorem}
Assume that $A'$ is minimal for $\sigma_\beta^\circ$.  If 
\begin{equation}
M = \min\{-u_n\mid u=(u_1,\dots,u_n)\in{\mathcal M}_\beta\text{ and }F_u(\Lambda)\neq 0\},
\end{equation}
then for all $u\in{\mathcal M}_\beta$ the series $F_u(\Lambda)$ has integral coefficients.
\end{theorem}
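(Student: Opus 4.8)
The plan is to follow the strategy announced in the introduction. Since a rational number is an integer if and only if it is $p$-integral for every prime $p$, it suffices to fix a prime $p$ and show that each $F_u(\Lambda)$ has $p$-integral coefficients. To this end I would realize a generating series of the $F_u$ as an eigenvector of the Dwork--Frobenius operator attached to $A$, show that the $p$-adic valuation of the corresponding eigenvalue equals $M$, and then apply the standard fact that an eigenvector of a completely continuous integral operator whose eigenvalue is not too highly divisible by $p$ has $p$-integral coefficients.

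Recall the operator. Fix $\pi$ with $\pi^{p-1}=-p$ and set $\theta(t)=\exp(\pi(t-t^p))=\sum_{i\ge0}\theta_it^i$, so that $\theta_i\in{\mathbb Z}_p[\pi]$ with $\operatorname{ord}_p\theta_i\ge i/(p(p-1))$. Grade the Laurent power series ring in $\Lambda_1,\dots,\Lambda_N$ by ${\mathbb Z}^n$ via $\deg\Lambda_j={\bf a}_j$, and on the subspace defined by the appropriate $p$-adic growth condition put $\alpha=\psi_p\circ(\text{multiplication by }\prod_{j=1}^N\theta(\Lambda_j))$, where $\psi_p(\sum_w c_w\Lambda^w)=\sum_w c_{pw}\Lambda^w$. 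Because each ${\bf a}_j$ has last coordinate $1$, $\alpha$ lowers the ${\mathbb Z}^n$-grading by the factor $p$ up to the bounded shift contributed by the $\theta$-factors; and because the $\theta_i$ are $p$-integral, $\alpha$ is a completely continuous integral operator whose matrix entry from graded degree $w$ to graded degree $w'$ is supported on $pw'-w\in{\mathbb N}A$ and has $\operatorname{ord}_p$ at least $(pw'_n-w_n)/(p(p-1))$.

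The heart of the argument is the eigenvector identity, and this is the step I expect to be the main obstacle. Each $F_u(\Lambda)$ is ${\mathbb Z}^n$-homogeneous of degree $u$, so, after multiplying each $F_u$ by the appropriate power of $\pi$ (a power determined by $u_n$, dictated by the requirement that the $\theta$-factors intertwine correctly with the grading), the series $\Phi=\sum_{u\in{\mathcal M}_\beta}F_u(\Lambda)$ lies in the growth space and has $u$-graded piece $F_u$. I would show $\alpha\Phi=\lambda\Phi$ with $\operatorname{ord}_p\lambda=M$, the value $M$ being forced by $\beta=\sum_{j=1}^M{\bf a}_j$ being a sum of exactly $M$ generators. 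The verification is formal: extracting the degree-$u$ part of $\alpha\Phi=\lambda\Phi$ yields a system of congruences relating $F_u$ to the $F_w$ with $pu-w\in{\mathbb N}A$, and this system is precisely the Frobenius lift of the $A$-hypergeometric recursions satisfied by the $F_u$ (Corollary~2.8). One must check along the way that the graded degrees $w$ occurring in these congruences again lie in ${\mathcal M}_\beta$; this uses the minimality of $A'$ for $\sigma_\beta^\circ$ exactly as in the proof of Lemma~2.1, together with the fact that the relevant monomials of $\prod_j\theta(\Lambda_j)$ involve only the ${\bf a}_j\in\sigma_\beta$. Pinning down the normalizing powers of $\pi$ on the $F_u$ and on $\lambda$ so that $\alpha\Phi=\lambda\Phi$ holds exactly and $\operatorname{ord}_p\lambda$ comes out equal to $M$ is the delicate part.

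Finally, from $\alpha\Phi=\lambda\Phi$ I would conclude by the Dwork estimate: for $\alpha$ of the above type, an eigenvector in the growth space with eigenvalue $\lambda$ satisfying $\operatorname{ord}_p\lambda\le\min\{-w_n:\Phi\text{ has a nonzero graded piece of degree }w\}$ has $p$-integral coefficients, as one sees by iterating $\Phi=\lambda^{-k}\alpha^k\Phi$ and balancing the loss $-k\operatorname{ord}_p\lambda$ against the accumulated gains from the $\theta$-factors as $k\to\infty$ (using that each application of $\alpha$ multiplies the relevant last coordinates by $p$). In our situation the nonzero graded pieces of $\Phi$ have degrees $u\in{\mathcal M}_\beta$ with $F_u\neq0$, so the hypothesis needed is $M\le\min\{-u_n\mid u\in{\mathcal M}_\beta,\ F_u\neq0\}$; since $-\beta\in{\mathcal M}_\beta$ with $F_{-\beta}\neq0$ (its $l=0$ term has coefficient $1$) and $(-\beta)_n=-M$, the reverse inequality holds automatically, so the needed hypothesis is exactly (2.11). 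Therefore $\Phi$, hence each of its graded pieces, hence --- after stripping off the normalizing power of $\pi$, which (2.11) makes harmless --- each $F_u(\Lambda)$ has $p$-integral coefficients; as $p$ was an arbitrary prime, every $F_u(\Lambda)$ has integral coefficients.
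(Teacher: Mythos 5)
Your high-level strategy is exactly the one the paper pursues: reduce to $p$-integrality for each prime, realize a generating series of the $F_u$ as an eigenvector of a Dwork--Frobenius operator $\alpha$, identify the eigenvalue as (essentially) $p^M$, and leverage the hypothesis on $M$ to conclude. But two technical ingredients that you wave past are exactly where the real work happens, and as stated your version would not close.

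First, you take $\theta(t)=\exp(\pi(t-t^p))$ with the weak estimate $\operatorname{ord}_p\theta_i\geq i/(p(p-1))$. The paper instead uses the Artin--Hasse splitting function $\theta(t)={\rm AH}(\gamma_0 t)$, for which the sharper estimate $\operatorname{ord}\theta_i\geq i/(p-1)$ holds (Eq.~(3.2)). This extra factor of $p$ is what makes the crucial polynomial $\tilde\theta_\nu(\Lambda)=\gamma_0^{-\nu_n}\theta_\nu(\Lambda)$ in Eq.~(6.2) have $p$-integral coefficients, so that the relation $p^M G_\rho=\sum\gamma_0^{-\rho_n+u_n}\theta_\nu G_u(\Lambda^p)$ can be divided by $p^M$ while keeping integral ``matrix entries.'' With your weaker estimate the balancing in the final step is lost.

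Second, and more seriously, your candidate eigenvector is (up to a power of $\pi$) the series $\sum_u F_u(\Lambda)$ itself. The paper shows that this is \emph{not} an exact eigenvector: one must first pass from $F(\Lambda,x)$ to the renormalized $G(\Lambda,x)$ of Eq.~(3.10), which inserts extra factors $\hat\theta_1(\Lambda_j x^{{\bf a}_j})$ into each of the $N$ products. Concretely, the heart of the eigenvector identity is $\delta_-\bigl(\theta(t)Q(t^p)\bigr)=pQ(t)$ (Eq.~(5.2)), where $Q$ is the \emph{negative-power part of} $\zeta(\gamma_0 t)\hat\theta_1(t)$ rather than of $\zeta(\gamma_0 t)$ alone; this identity simply fails without the $\hat\theta_1$ factor. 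Proposition~3.21 is then needed to transfer the $p$-integrality of the $G_u$ back to the $F_u$, which works because $\hat\theta_1$ has $p$-integral coefficients with $\hat\theta_{1,0}=1$ (Eqs.~(3.14), (3.18), (3.19)). You correctly flag the eigenvector identity as ``the main obstacle'' and ``the delicate part,'' but the proposal does not identify the renormalization that resolves it, and without it the central claim $\alpha\Phi=\lambda\Phi$ is false.

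Finally, the paper's deduction of $p$-integrality from the eigenvector identity is not a straight ``iterate $\Phi=\lambda^{-k}\alpha^k\Phi$ and balance'' argument: it is an induction on $d=\sum_{j>M}l_j$ using that each application of $\alpha$ divides this quantity by at least $p$ (Section~6). Your sketch has the right flavor, but once the two ingredients above are in place you would need to carry out this induction (or something equivalent) to make the limit argument rigorous.
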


Note that the hypothesis of the theorem is redundant: Eq.~(2.11) implies that $A'$ is minimal for $\sigma_\beta^\circ$.  
Note also that by Equation (2.2), the condition that $F_u(\Lambda) = 0$ is equivalent to the condition that the equation
\[ \sum_{j=1}^M (-l_j-1){\bf a}_j + \sum_{j=M+1}^N l_j{\bf a}_j =u \]
have no solution $l\in E_\beta$.  

To fix ideas and simplify the notation, we shall assume from now on that $\sigma_\beta = C(A)$, i.~e., that $\beta$ is an interior point of $C(A)$.  The case of general $\beta$ is proved along the same lines.  In this case we write ${\mathcal M}$ in place of ${\mathcal M}_\beta$ and we have ${\mathcal M} = (-C(A)^\circ)\cap{\mathbb Z}A$.  We also have $E_\beta = {\mathbb N}^N$.   With this assumption, Lemma~2.1 becomes the following statement.
\begin{lemma}
Assume that $A'$ is minimal for $C(A)^\circ$.  Let $u\in {\mathcal M}$ and suppose that
$u = \sum_{j=1}^N l_j{\bf a}_j$ with $l_j\in{\mathbb Z}$ for $j=1,\dots,M$ and $l_j\in{\mathbb N}$ for $j=M+1,\dots,N$.  Then $l_j<0$ for $j=1,\dots,M$.
\end{lemma}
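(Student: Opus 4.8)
The plan is to observe that Lemma~2.12 is nothing more than the specialization of Lemma~2.1 to the case $\sigma_\beta = C(A)$: under that hypothesis every ${\bf a}_j$ lies in $\sigma_\beta$, so the clause ``$l_j = 0$ for all $j \in \{M+1,\dots,N\}$ with ${\bf a}_j \not\in \sigma_\beta$'' in the conclusion of Lemma~2.1 is vacuous, and what remains is precisely the assertion that $l_j < 0$ for $j = 1,\dots,M$. Strictly speaking there is therefore nothing to prove, and I would simply quote Lemma~2.1.

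For completeness I would also record the direct argument, which in this setting is shorter than the proof of Lemma~2.1 since no hyperplane of support separating $\sigma_\beta$ from the rest of $C(A)$ is needed. Put $S = \{\, j \in \{1,\dots,M\} : l_j < 0 \,\}$. From $u = \sum_{j=1}^N l_j{\bf a}_j$ one rearranges to
\[ \sum_{j\in S}(-l_j){\bf a}_j = (-u) + \sum_{\substack{1\le j\le M\\ l_j>0}} l_j{\bf a}_j + \sum_{j=M+1}^N l_j{\bf a}_j . \]
Here $-u \in C(A)^\circ$ since $u \in {\mathcal M} = (-C(A)^\circ)\cap{\mathbb Z}A$, and the remaining two sums together form an element of $C(A)$ (all their coefficients are nonnegative); since adding an element of $C(A)$ to a point of $C(A)^\circ$ keeps it in $C(A)^\circ$, we get $\sum_{j\in S}(-l_j){\bf a}_j \in C(A)^\circ$. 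Next I would invoke the elementary fact that replacing the positive coefficients $-l_j$ by $1$ does not move the point out of $C(A)^\circ$: for any functional $h\ge 0$ on $C(A)$ that vanishes on a proper face, $\sum_{j\in S}(-l_j)h({\bf a}_j)>0$ forces $h({\bf a}_{j_0})>0$ for some $j_0\in S$, whence $\sum_{j\in S} h({\bf a}_j)>0$; as this holds for every such $h$, we conclude $\sum_{j\in S}{\bf a}_j \in C(A)^\circ$. Thus $-\sum_{j\in S}{\bf a}_j \in {\mathcal M}$, and if $S$ were a proper subset of $\{1,\dots,M\}$ (the empty set included) this would contradict the minimality of $A'$ for $C(A)^\circ$. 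Hence $S = \{1,\dots,M\}$, i.e.\ $l_j < 0$ for all $j=1,\dots,M$.

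I do not anticipate any real obstacle, since the mathematical content is entirely contained in Lemma~2.1, which has already been established. The only point requiring a little care is the passage from ``$\sum_{j\in S}(-l_j){\bf a}_j$ lies in the relative interior'' to ``$\sum_{j\in S}{\bf a}_j$ lies in the relative interior'', but that is exactly the observation implicit in the final step of the proof of Lemma~2.1.
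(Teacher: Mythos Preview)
Your proposal is correct and matches the paper exactly: the paper does not give a separate proof of Lemma~2.12 but simply introduces it as the specialization of Lemma~2.1 to the case $\sigma_\beta = C(A)$, which is precisely your primary observation. The supplementary direct argument you supply is also essentially the proof of Lemma~2.1 (rearrange, observe that $\sum_{j\in S}(-l_j){\bf a}_j\in C(A)^\circ$, pass to $\sum_{j\in S}{\bf a}_j\in C(A)^\circ$, invoke minimality), with the intermediate passage made more explicit than in the paper.
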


Theorem 2.10 becomes the following statement.
\begin{theorem}
Assume that $\sum_{j=1}^M {\bf a}_j$ is an interior point of $C(A)$.  If 
\[ M = \min\{-u_n\mid u=(u_1,\dots,u_n)\in {\mathcal M}\text{ and }F_u(\Lambda)\neq 0\}, \]
then for all $u\in{\mathcal M}$ the series $F_u(\Lambda)$ has integral coefficients.
\end{theorem}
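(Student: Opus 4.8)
The plan is to argue directly and elementarily, reducing everything to Legendre's formula for $v_p(m!)$ together with a single descent step; I do not think $p$-adic cohomology is actually needed for this statement. First I would reformulate both sides. Since $E_\beta={\mathbb N}^N$, each $l\in{\mathbb N}^N$ contributes to (2.2) exactly the monomial $\Lambda_1^{-l_1-1}\cdots\Lambda_M^{-l_M-1}\Lambda_{M+1}^{l_{M+1}}\cdots\Lambda_N^{l_N}$, and distinct $l$ give distinct monomials, so the coefficients occurring among all the $F_u(\Lambda)$ (as $u$ ranges over ${\mathcal M}$) are precisely the numbers $\pm\prod_{j=1}^M l_j!\big/\prod_{j=M+1}^N l_j!$, one for each $l\in{\mathbb N}^N$ with
\[ u(l):=\sum_{j=1}^M(-l_j-1){\bf a}_j+\sum_{j=M+1}^N l_j{\bf a}_j\in -C(A)^\circ \]
(membership of $u(l)$ in ${\mathbb Z}A$ being automatic); call such an $l$ \emph{admissible}. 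Thus the theorem asserts $\prod_{j\le M}l_j!/\prod_{j>M}l_j!\in{\mathbb Z}$ for every admissible $l$, and since ${\mathbb Z}=\bigcap_p{\mathbb Z}_{(p)}$ it suffices to prove $\sum_{j\le M}v_p(l_j!)\ge\sum_{j>M}v_p(l_j!)$ for every prime $p$ and every admissible $l$. On the hypothesis side, because every ${\bf a}_j$ has last coordinate $1$ one has $-u(l)_n=M+\sum_{j\le M}l_j-\sum_{j>M}l_j$, and $l=0$ is admissible (here the interiority of $\beta$ gives $-\beta\in -C(A)^\circ$) with $-u(0)_n=M$; hence the hypothesis is \emph{equivalent} to the combinatorial inequality $\sum_{j=1}^M l_j\ge\sum_{j=M+1}^N l_j$ for every admissible $l$.

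The one substantive step is a descent lemma: if $l$ is admissible, then so is $l':=(\lfloor l_1/p\rfloor,\dots,\lfloor l_N/p\rfloor)$. To prove it I would write $l_j=p\lfloor l_j/p\rfloor+r_j$ with $0\le r_j\le p-1$ and check by direct substitution that $u(l)=p\,u(l')+v$, where
\[ v=\sum_{j=1}^M(p-1-r_j){\bf a}_j+\sum_{j=M+1}^N r_j{\bf a}_j. \]
All coefficients of $v$ lie in $\{0,\dots,p-1\}$, so $v\in C(A)$; since $u(l)\in -C(A)^\circ$ and $-v\in -C(A)$, and the relative interior of a convex cone is stable under adding elements of the cone, we get $p\,u(l')=u(l)-v\in -C(A)^\circ$, hence $u(l')\in -C(A)^\circ$ (the relative interior is also stable under positive scaling), and $u(l')\in{\mathbb Z}A$, so $l'$ is admissible. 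Iterating, $l^{(i)}:=(\lfloor l_j/p^i\rfloor)_j$ is admissible for all $i\ge0$; applying the combinatorial form of the hypothesis to each $l^{(i)}$ gives $\sum_{j\le M}\lfloor l_j/p^i\rfloor\ge\sum_{j>M}\lfloor l_j/p^i\rfloor$ for every $i\ge1$, and summing over $i$ together with $v_p(m!)=\sum_{i\ge1}\lfloor m/p^i\rfloor$ yields exactly $\sum_{j\le M}v_p(l_j!)\ge\sum_{j>M}v_p(l_j!)$. Running over all $p$ completes the argument.

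I expect the only point needing genuine care to be the descent lemma, and within it the convex‑geometry input $-C(A)^\circ+(-C(A))\subseteq -C(A)^\circ$ (the nonnegativity of the coefficients $p-1-r_j$ and $r_j$, which forces $v\in C(A)$, being the trivial but essential observation); all the rest is bookkeeping with Legendre's formula and the identity $\lfloor\lfloor m/p\rfloor/p\rfloor=\lfloor m/p^2\rfloor$. It would then remain to note that the general statement, Theorem~2.10, goes through along the same lines, checking in addition that the truncation $l\mapsto\lfloor l/p\rfloor$ preserves the vanishing conditions defining $E_\beta$ and that the face $\sigma_\beta$ plays the role of $C(A)$ everywhere above.
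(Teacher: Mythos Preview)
Your argument is correct and complete. The reformulation of the hypothesis as the inequality $\sum_{j\le M}l_j\ge\sum_{j>M}l_j$ for every admissible $l$ is valid (distinct $l$'s do give distinct monomials, so there is no cancellation, and $l=0$ witnesses the value $M$); the descent identity $u(l)=p\,u(l')+v$ checks out; and the convex-cone facts $C(A)^\circ+C(A)\subseteq C(A)^\circ$ and $t\cdot C(A)^\circ=C(A)^\circ$ for $t>0$ are standard. Summing the resulting inequalities over $i\ge1$ and invoking Legendre's formula finishes the job.

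This is, however, a genuinely different proof from the paper's. The paper does not argue at the level of individual coefficients at all. Instead it replaces $F_u$ by a $p$-adically perturbed series $G_u$ (built using the Artin--Hasse exponential) whose coefficients are $p$-integral iff those of $F_u$ are, packages the $G_u$ into a generating series $G(\Lambda,x)$, and proves that $G$ is an eigenvector of the Dwork--Frobenius operator $\alpha^*$ with eigenvalue $p^M$ (Theorem~5.1). The hypothesis that the minimum of $-u_n$ equals $M$ is what makes the factor $p^M$ exactly cancel, after which $p$-integrality follows by an induction on $\sum_{j>M}l_j$. Your proof is shorter, fully elementary, and in spirit a direct extension of Landau's classical $p$-digit argument to the cone $C(A)$; it isolates precisely the combinatorial content of the hypothesis. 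What the paper's route buys is the eigenvector equation itself: the authors' stated aim is to use this Dwork--Frobenius structure for $p$-adic formulas (Hasse invariants, unit roots, mirror maps), and Theorem~5.1 is the real payload for those applications, with integrality falling out as a corollary. For the bare integrality statement of Theorem~2.13, your approach is the more economical one.
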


{\bf Remark.}  The restriction that the last coordinate of each ${\bf a}_j$ equal 1 is not essential.  Theorem~2.10 remains true if we assume only that all ${\bf a}_j$ lie on a hyperplane $\sum_{i=1}^n c_i u_i=1$ with $c_i\in{\mathbb Z}$ for $i=1,\dots,n$.  When this condition is satisfied, there exists a linear transformation $T:{\mathbb R}^n\to{\mathbb R}^n$ which restricts to an isomorphism of abelian groups $T:{\mathbb Z}^n\to{\mathbb Z}^n$ and which transforms the hyperplane $\sum_{i=1}^n c_iu_i=1$ to the hyperplane $u_n=1$.  The set $T(A) = \{T({\bf a}_j)\}_{j=1}^N$ then satisfies the hypothesis of Theorem~2.10, $l\in E_\beta$ if and only if $l\in E_{T(\beta)}$, and
\[ \sum_{j=1}^M (-l_j-1){\bf a}_j+\sum_{j=M+1}^N l_j{\bf a}_j = u \]
if and only if
\[ \sum_{j=1}^M (-l_j-1)T({\bf a}_j)+\sum_{j=M+1}^N l_jT({\bf a}_j) = T(u). \]
The series $F_u(\Lambda)$ and $F_{T(u)}(\Lambda)$ are thus identical.

\section{Generating series}

It will be convenient to have a generating series construction of the~$F_u(\Lambda)$.
Consider the series
\[ \zeta(t) = \sum_{i=0}^\infty (-1)^ii!t^{-i-1}. \]
A straightforward calculation shows that for $u\in{\mathcal M}$, $F_u(\Lambda)$ is the coefficient of $x^u$~in 
\[ \bigg(\prod_{j=1}^M \zeta(\Lambda_jx^{{\bf a}_j})\bigg) \bigg(\prod_{j=M+1}^N \exp(\Lambda_jx^{{\bf a}_j})\bigg). \]
We define
\[ \delta_{\mathcal M}(x^u) = \begin{cases} x^u & \text{if $u\in{\mathcal M}$,} \\ 0 & \text{if $u\not\in{\mathcal M}$.} \end{cases} \]
Then
\begin{equation}
\delta_{\mathcal M}\bigg( \bigg(\prod_{j=1}^M \zeta(\Lambda_jx^{{\bf a}_j})\bigg) \bigg(\prod_{j=M+1}^N \exp(\Lambda_jx^{{\bf a}_j})\bigg)\bigg) = \sum_{u\in{\mathcal M}} F_u(\Lambda)x^u.
\end{equation}

To prove Theorem 2.13, we fix a prime $p$ and prove that all $F_u(\Lambda)$ have $p$-integral coefficients.  We begin by normalizing the series (3.1) for the prime $p$.  

We recall some estimates from \cite[Section 3]{AS2} for this purpose.
Let $ {\rm AH}(t)= \exp\big(\sum_{i=0}^{\infty}t^{p^i}/p^i\big)$ be the Artin-Hasse series, a power series in $t$ that has $p$-integral coefficients, let $\gamma_0$ be a zero of the series $\sum_{i=0}^\infty t^{p^i}/p^i$ having ${\rm ord}\;\gamma_0 = 1/(p-1)$, and set 
\[ \theta(t) = {\rm AH}({\gamma}_0t)=\sum_{i=0}^{\infty}\theta_i t^i. \]
We then have
\begin{equation}
{\rm ord}\: \theta_i\geq \frac{i}{p-1}.
\end{equation}

We define $\hat{\theta}(t) = \prod_{j=0}^\infty \theta(t^{p^j})$, which gives $\theta(t) = \hat{\theta}(t)/\hat{\theta}(t^p)$.   If we set 
\begin{equation}
\gamma_j = \sum_{i=0}^j \frac{\gamma_0^{p^i}}{p^i},
\end{equation}
then
\begin{equation}
\hat{\theta}(t) = \exp\bigg(\sum_{j=0}^{\infty} \gamma_j t^{p^j}\bigg) = \prod_{j=0}^\infty \exp(\gamma_j t^{p^j}).
\end{equation}
If we write $\hat{\theta}(t) = \sum_{i=0}^\infty \hat{\theta}_i(\gamma_0 t)^i/i!$, then by \cite[Equation~(3.8)]{AS2} we have
\begin{equation}
{\rm ord}\:\hat{\theta}_i\geq 0.
\end{equation}

We also need the series
\begin{equation}
\hat{\theta}_1(t) := \prod_{j=1}^\infty \exp(\gamma_jt^{p^j}) = :\sum_{i=0}^\infty \frac{\hat{\theta}_{1,i}}{i!}(\gamma_0 t)^i. 
\end{equation}
Note that $\hat{\theta}(t) = \exp(\gamma_0t)\hat{\theta}_1(t)$.  
By \cite[Equation~(3.10)]{AS2}
\begin{equation}
{\rm ord}\:\hat{\theta}_{1,i}\geq \frac{i(p-1)}{p}. 
\end{equation}

Define the generating series $F(\Lambda,x)$ by the formula
\begin{equation}
F(\Lambda,x) =\delta_{\mathcal M}\bigg( \bigg(\prod_{j=1}^M \zeta(\gamma_0\Lambda_jx^{{\bf a}_j})\bigg) \bigg(\prod_{j=M+1}^N \exp(\gamma_0\Lambda_jx^{{\bf a}_j})\bigg)\bigg).
\end{equation}
It follows from (3.1) that
\begin{equation}
F(\Lambda,x) = \sum_{u\in{\mathcal M}} F_u(\Lambda)\gamma_0^{u_n}x^u.
\end{equation}

Our $p$-adically normalized series $G(\Lambda,x)$ is defined by
\begin{multline}
G(\Lambda,x) = \\
\delta_{\mathcal M}\bigg( \bigg(\prod_{j=1}^M \zeta(\gamma_0\Lambda_jx^{{\bf a}_j})\hat{\theta}_1(\Lambda_jx^{{\bf a}_j})\bigg) \bigg(\prod_{j=M+1}^N \exp(\gamma_0\Lambda_jx^{{\bf a}_j})\hat{\theta}_1(\Lambda_jx^{{\bf a}_j})\bigg)\bigg),
\end{multline}
which we write as
\begin{equation}
G(\Lambda,x) = \sum_{u\in{\mathcal M}} G_u(\Lambda)\gamma_0^{u_n}x^u.
\end{equation}

The first step is to evaluate (3.10) to obtain an explicit formula for the $G_u(\Lambda)$.  We introduce some notation to simplify formulas.  Let $Q(t)$ be the sum of the negative powers of $t$ in the product $\zeta(\gamma_0t)\hat{\theta}_1(t)$.  A straightforward calculation shows that
\begin{equation}
Q(t) = \sum_{i=0}^\infty (-1)^i i! \sigma_i \gamma_0^{-i-1}t^{-i-1},
\end{equation}
where
\begin{equation}
\sigma_i = \sum_{k=0}^\infty (-1)^k \hat{\theta}_{1,k} \binom{i+k}{k}.
\end{equation}
Since $\hat{\theta}_{1,0} = 1$, Equation~(3.7) shows that $\sigma_i$ is $p$-integral and that
\begin{equation}
\sigma_i \equiv 1\pmod{\gamma_0}.
\end{equation}
It follows from Lemma 2.12 that to obtain an $x^u$ with $u\in{\mathcal M}$, one must take negative powers of each $x^{{\bf a}_j}$, $j=1,\dots,M$, in the first product on the right-hand side of (3.10).  Thus
\begin{equation}
G(\Lambda,x) = 
\delta_{\mathcal M}\bigg( \bigg(\prod_{j=1}^M Q(\Lambda_jx^{{\bf a}_j})\bigg) \bigg(\prod_{j=M+1}^N \exp(\gamma_0\Lambda_jx^{{\bf a}_j})\hat{\theta}_1(\Lambda_jx^{{\bf a}_j})\bigg)\bigg),
\end{equation}

A similar calculation shows that
\begin{equation}
\exp(\gamma_0\Lambda_jx^{{\bf a}_j})\hat{\theta}_1(\Lambda_jx^{{\bf a}_j}) = \sum_{i=0}^\infty \frac{\tau_i\gamma_0^i\Lambda_j^ix^{i{\bf a}_j}}{i!},
\end{equation}
where
\begin{equation}
\tau_i = \sum_{k=0}^i \binom{i}{k}\hat{\theta}_{1,k}.
\end{equation}
Equation~(3.7) shows that $\tau_i$ is $p$-integral and that
\begin{equation}
\tau_i \equiv 1\pmod{\gamma_0}.
\end{equation}
Set for $l\in{\mathbb N}^N$
\[ \sigma(l) = \prod_{j=1}^M \sigma_{l_j}\prod_{j=M+1}^N \tau_{l_j}. \]
By (3.14) and (3.18), $\sigma(l)$ is $p$-integral and
\begin{equation}
\sigma(l)\equiv 1 \pmod{\gamma_0}.
\end{equation}

It now follows from (3.12), (3.15), and (3.16) that for $u\in{\mathcal M}$
\begin{multline}
G_u(\Lambda) = \\
\sum_{\substack{l\in{\mathbb N}^N\\ \sum_{j=1}^M (-l_j-1){\bf a}_j +\\ \sum_{j=M+1}^N l_j{\bf a}_j= u}} 
(-1)^{\sum_{j=1}^M l_j}\frac{\sigma(l)\prod_{j=1}^M l_j!}{\prod_{j=M+1}^N l_j!} \Lambda_1^{-l_1-1}\cdots\Lambda_M^{-l_M-1}\Lambda_{M+1}^{l_{M+1}}\cdots\Lambda_N^{l_N}. 
\end{multline}
Equations (2.2) and (3.20), together with (3.19), now imply the following proposition.
\begin{proposition}
If $A'$ is minimal for $C(A)^\circ$, then for $u\in{\mathcal M}$ the series $F_u(\Lambda)$ has $p$-integral coefficients if and only if the series $G_u(\Lambda)$ has $p$-integral coefficients.
\end{proposition}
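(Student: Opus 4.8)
The plan is to read the result off the explicit formulas (2.2) and (3.20) by a termwise comparison. In the present setting $E_\beta = {\mathbb N}^N$, so for $u\in{\mathcal M}$ both $F_u(\Lambda)$ and $G_u(\Lambda)$ are sums over the same index set, namely all $l\in{\mathbb N}^N$ with $\sum_{j=1}^M(-l_j-1){\bf a}_j+\sum_{j=M+1}^N l_j{\bf a}_j=u$, and the term of each corresponding to a given such $l$ carries the same monomial $\Lambda_1^{-l_1-1}\cdots\Lambda_M^{-l_M-1}\Lambda_{M+1}^{l_{M+1}}\cdots\Lambda_N^{l_N}$. First I would note that the map $l\mapsto(-l_1-1,\dots,-l_M-1,l_{M+1},\dots,l_N)$ is injective, so distinct admissible $l$ give distinct monomials; hence there is no cancellation, $F_u(\Lambda)$ and $G_u(\Lambda)$ have exactly the same monomial support, and if $c_l\in{\mathbb Q}$ denotes the coefficient of the $l$-th monomial in $F_u(\Lambda)$ then the coefficient of that same monomial in $G_u(\Lambda)$ is precisely $\sigma(l)c_l$. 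Thus $F_u(\Lambda)$ has $p$-integral coefficients if and only if $c_l$ is $p$-integral for every admissible $l$, and $G_u(\Lambda)$ has $p$-integral coefficients if and only if $\sigma(l)c_l$ is $p$-integral for every admissible $l$.

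The second step is to observe that each $\sigma(l)$ is a $p$-adic unit. By (3.19) we have $\sigma(l)\equiv 1\pmod{\gamma_0}$, and since ${\rm ord}\,\gamma_0 = 1/(p-1)>0$ this forces ${\rm ord}\,\sigma(l)=0$; consequently $\sigma(l)$ is invertible in the relevant ring of $p$-adic integers and $\sigma(l)^{-1}$ is again $p$-integral. (That $\sigma(l)$ itself is $p$-integral was already recorded just before (3.19), but only the unit property is needed below.)

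The conclusion is then immediate: multiplication by the $p$-integral element $\sigma(l)$ cannot lower valuations, so $p$-integrality of $c_l$ yields $p$-integrality of $\sigma(l)c_l$; and multiplication by the $p$-integral element $\sigma(l)^{-1}$ yields the converse. Hence $c_l$ and $\sigma(l)c_l$ are $p$-integral simultaneously, and running this equivalence over all admissible $l$ proves the proposition. There is no analytic difficulty here; the one point that genuinely needs care is the termwise matching of (2.2) with (3.20) — that is, the injectivity of the exponent map $l\mapsto(-l_1-1,\dots,l_N)$, which is precisely what rules out any collapsing of terms that could invalidate the coefficientwise comparison. All the substantive input — the congruence (3.19) and the positivity ${\rm ord}\,\gamma_0>0$ — is already in hand.
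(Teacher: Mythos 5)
Your proposal is correct and matches the paper's (extremely terse) argument: the paper simply says the result follows from (2.2), (3.20), and (3.19), i.e., from termwise comparison and the fact that $\sigma(l)\equiv 1\pmod{\gamma_0}$ makes $\sigma(l)$ a $p$-adic unit. You have spelled out the two small points the paper leaves implicit — that the exponent map $l\mapsto(-l_1-1,\dots,l_N)$ is injective so no cancellation can occur, and that $\mathrm{ord}\,\gamma_0>0$ forces $\mathrm{ord}\,\sigma(l)=0$ — both of which are exactly right.
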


Note that since $\exp (\gamma_0t) \hat{\theta}_1(t)=\hat{\theta}(t)$, Equation (3.15) implies that $G(\Lambda,x)$ can be written in the more compact form
\begin{equation}
G(\Lambda,x) = \delta_{\mathcal M}\bigg(\bigg(\prod_{j=1}^M Q(\Lambda_jx^{{\bf a}_j})\bigg)\bigg(\prod_{j=M+1}^N \hat{\theta}(\Lambda_jx^{{\bf a}_j})\bigg)\bigg),
\end{equation}
and Equation (3.16) implies that
\begin{equation}
\hat{\theta}(\Lambda_jx^{{\bf a}_j}) = \sum_{i=0}^\infty \frac{\tau_i\gamma_0^i\Lambda_j^ix^{i{\bf a}_j}}{i!}.
\end{equation}

\section{The Dwork-Frobenius operator}

By Proposition 3.21, it suffices to prove that the series $G_u(\Lambda)$ have $p$-integral coefficients.  The main step in the proof will be to show that $G(\Lambda,x)$ is an eigenvector of the Dwork-Frobenius operator.

Define
\begin{equation}
\theta(\Lambda,x) = \prod_{j=1}^N \theta(\Lambda_jx^{{\bf a}_j}).
\end{equation}
Let ${\mathbb N}A$ be the semigroup generated by $A$.  By \cite[Equations~(3.20)--(3.22)]{AS2} we have
\begin{equation}
\theta(\Lambda,x) = \sum_{\nu\in{\mathbb N}A} \theta_\nu(\Lambda)x^\nu,
\end{equation}
where $\theta_\nu(\Lambda)$ is a polynomial in $\Lambda$:
\begin{equation}
\theta_\nu(\Lambda) = \sum_{\substack{m\in{\mathbb N}^N\\ \sum_{j=1}^N m_j{\bf a}_j = \nu}} \theta_m^{(\nu)}\Lambda^m
\end{equation}
with
\begin{equation}
\theta_m^{(\nu)} = \prod_{j=1}^N \theta_{m_j}.
\end{equation}
We have by (3.2)
\begin{equation}
\text{ord}\: \theta_m^{(\nu)}\geq \frac{\sum_{j=1}^N m_j}{p-1} = \frac{\nu_n}{p-1}.
\end{equation}

The Dwork-Frobenius operator $\alpha^*$ on $G(\Lambda,x)$ is defined by
\begin{equation}
\alpha^*\big(G(\Lambda,x)\big) = \delta_{\mathcal M}\bigg( \theta(\Lambda,x)G(\Lambda^p,x^p)\bigg).
\end{equation}
We check that this operation is well-defined.  Formally, 
\[ \alpha^*\big(G(\Lambda,x)\big) = \sum_{\rho\in{\mathcal M}} \zeta_\rho(\Lambda)x^\rho, \]
where
\begin{equation}
\zeta_\rho(\Lambda) = \sum_{\substack{\nu\in{\mathbb N}A,\, u\in{\mathcal M}\\ \nu+pu=\rho}} \gamma_0^{u_n} \theta_\nu(\Lambda) G_u(\Lambda^p).
\end{equation}
The product $\theta_\nu(\Lambda)G_u(\Lambda^p)$ is well-defined because $\theta_\nu(\Lambda)$ is a polynomial.  We show that the series on the right-hand side of (4.7) converges by computing the coefficient of $\Lambda^\mu$ in this expression.

From (3.20), (4.3), and (4.4) the coefficient of $\Lambda^\mu$ in $\gamma_0^{u_n}\theta_\nu(\Lambda)G_u(\Lambda^p)$ is 
\begin{equation}
\sum (-1)^{\sum_{j=1}^M l_j} \sigma(l) \bigg(\prod_{j=1}^M \gamma_0^{-l_j-1}\theta_{m_j}l_j!\bigg)\bigg(\prod_{j=M+1}^N \frac{\gamma_0^{l_j}\theta_{m_j}}{l_j!}\bigg),
\end{equation}
where the sum is over the (finite) set of all $m\in{\mathbb N}^N$ and $l\in{\mathbb N}^N$ satisfying
\begin{equation}
\sum_{j=1}^N m_j{\bf a}_j = \nu,
\end{equation}
\begin{equation}
\sum_{j=1}^M (-l_j-1){\bf a}_j+\sum_{j=M+1}^N l_j{\bf a}_j = u,
\end{equation}
and 
\begin{equation}
\mu_j = \begin{cases} m_j+p(-l_j-1) & \text{for $j=1,\dots,M$,}\\ m_j+pl_j & \text{for $j=M+1,\dots,N$,} \end{cases}
\end{equation}
and where we have written $\gamma_0^{u_n} = \prod_{j=1}^M \gamma_0^{-l_j-1}\cdot\prod_{j=M+1}^N\gamma_0^{l_j}$, which follows from (4.10) since the last coordinate of each ${\bf a}_j$ equals 1.

For the last factor on the right-hand side of (4.8) we have
\[ \text{ord}\:\prod_{j=M+1}^N \frac{\gamma_0^{l_j}\theta_{m_j}}{l_j!}\geq \sum_{j=M+1}^N \frac{m_j+s_{l_j}}{p-1}, \]
where for a nonnegative integer $a$, $s_a$ denotes the sum of the digits in the $p$-adic expansion of $a$.  In particular, this factor is $p$-integral.  For the next-to-last factor on the right-hand side of (4.8) we have
\[ \text{ord}\:\prod_{j=1}^M \gamma_0^{-l_j-1}\theta_{m_j}l_j!\geq \sum_{j=1}^M \frac{m_j-1-s_{l_j}}{p-1}. \]
We have the elementary estimate $s_{l_j}/(p-1)\leq\log_p(pl_j)$ and from (4.11) we get $m_j = p(l_j+1)+\mu_j$, which gives the estimate
\begin{equation} \frac{m_j-1-s_{l_j}}{p-1}\geq \frac{p(l_j+1)+\mu_j-1}{p-1}-\log_p(pl_j). \end{equation}
As $u$ grows without bound on the right-hand side of (4.7), the sum $\sum_{j=1}^M l_j$ grows without bound also, so estimate (4.12) implies that the series (4.7) converges.

We normalize the series (4.7) to be consistent with the normalization of $G(\Lambda,x)$.  Write
\begin{equation}
\alpha^*\big(G(\Lambda,x)\big) = \sum_{\rho\in{\mathcal M}} \eta_\rho(\Lambda)\gamma_0^{\rho_n}x^\rho,
\end{equation}
where
\begin{equation}
\eta_\rho(\Lambda) = \sum_{\substack{\nu\in{\mathbb N}A,\, u\in{\mathcal M}\\ \nu+pu=\rho}} \gamma_0^{-\rho_n +u_n} \theta_\nu(\Lambda) G_u(\Lambda^p).
\end{equation}

For later use, we record the formula for $\zeta_\rho(\Lambda)$.  This follows from Equations (4.7)--(4.11).
\begin{proposition}
For $\rho\in{\mathcal M}$, the coefficient of $x^\rho$ in $\alpha^*\big(G(\Lambda,x)\big)$ equals
\begin{multline}
\sum_{\substack{m,l\in{\mathbb N}^N\\ \sum_{j=1}^M (m_j+p(-l_j-1)){\bf a}_j + \\
\sum_{j=M+1}^N (m_j+pl_j){\bf a}_j = \rho}}  (-1)^{\sum_{j=1}^M l_j} \sigma(l) \bigg(\prod_{j=1}^M \gamma_0^{-l_j-1}\theta_{m_j}l_j!\bigg)\bigg(\prod_{j=M+1}^N \frac{\gamma_0^{l_j}\theta_{m_j}}{l_j!}\bigg)\\
\cdot\prod_{j=1}^M \Lambda_j^{m_j+p(-l_j-1)} \prod_{j=M+1}^N \Lambda_j^{m_j+pl_j}.
\end{multline}
The coefficient of $\Lambda^\mu$ in this expression is obtained by restricting $m,l$ to satisfy~(4.11).
\end{proposition}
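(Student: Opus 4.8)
The plan is to unwind the definition (4.6) of $\alpha^*$ and substitute the explicit expansions already established. First I would use (4.2)--(4.4) for $\theta(\Lambda,x)$ and (3.11), (3.20) for $G(\Lambda,x)$ (the latter evaluated at $(\Lambda^p,x^p)$) to write $\theta(\Lambda,x)G(\Lambda^p,x^p)$ as a sum over pairs $(\nu,u)$ with $\nu\in{\mathbb N}A$ and $u\in{\mathcal M}$, in which the $(\nu,u)$-term has $x$-exponent $\nu+pu$ and coefficient $\gamma_0^{u_n}\theta_\nu(\Lambda)G_u(\Lambda^p)$; this product makes sense because $\theta_\nu(\Lambda)$ is a polynomial. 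Applying $\delta_{\mathcal M}$ retains exactly the terms with $\nu+pu=\rho$ for $\rho\in{\mathcal M}$, so the coefficient of $x^\rho$ is the expression (4.7) for $\zeta_\rho(\Lambda)$. Convergence of that sum has already been verified via the estimate (4.12), so $\delta_{\mathcal M}$ may be interchanged with the summation and the rearrangement is legitimate.

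Next I would expand each summand $\gamma_0^{u_n}\theta_\nu(\Lambda)G_u(\Lambda^p)$ into monomials. By (3.20), (4.3), and (4.4) this has in effect already been done in (4.8): the summand is a sum of the indicated scalar factors times $\prod_{j=1}^M\Lambda_j^{m_j+p(-l_j-1)}\prod_{j=M+1}^N\Lambda_j^{m_j+pl_j}$ over $m,l\in{\mathbb N}^N$ satisfying (4.9) and (4.10), with the power of $\gamma_0$ written as $\prod_{j=1}^M\gamma_0^{-l_j-1}\cdot\prod_{j=M+1}^N\gamma_0^{l_j}$ since the last coordinate of each ${\bf a}_j$ is $1$. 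Summing (4.8) over all $(\nu,u)$ with $\nu+pu=\rho$, I would note that for each $(m,l)$ the ambient pair is $\nu=\sum_j m_j{\bf a}_j$ and $u=\sum_{j=1}^M(-l_j-1){\bf a}_j+\sum_{j=M+1}^N l_j{\bf a}_j$, so that the condition $\nu+pu=\rho$ becomes precisely the linear condition $\sum_{j=1}^M(m_j+p(-l_j-1)){\bf a}_j+\sum_{j=M+1}^N(m_j+pl_j){\bf a}_j=\rho$ displayed in (4.16). Hence $\zeta_\rho(\Lambda)$ equals the single sum over all such $(m,l)$, which is (4.16). Finally, since the $\Lambda$-monomial attached to a pair $(m,l)$ equals $\Lambda^\mu$ exactly when (4.11) holds, the coefficient of $\Lambda^\mu$ is obtained by imposing (4.11) on the sum.

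This is in essence a bookkeeping argument, so I do not expect a genuine obstacle; the one point worth a word is that the constraint $u\in{\mathcal M}$ present in (4.7) is automatically satisfied once the linear condition in (4.16) holds. Indeed, given $m,l\in{\mathbb N}^N$ obeying that condition and $\rho\in{\mathcal M}$, we have $pu=\rho-\sum_j m_j{\bf a}_j$, which lies in $-C(A)^\circ$ because $-C(A)^\circ$ is stable under the addition of elements of $-C(A)$; dividing by $p$ gives $u\in -C(A)^\circ$, and $u\in{\mathbb Z}A$ by construction, so $u\in{\mathcal M}$. Beyond this, one need only keep the ranges $j\le M$ and $j>M$ separate and check that the index data of (4.7) and of (4.8) assemble correctly into (4.16).
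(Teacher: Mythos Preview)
Your proposal is correct and follows the same route as the paper, which simply records Proposition~4.15 as a consequence of (4.7)--(4.11) without writing out a separate proof. Your additional remark that the constraint $u\in{\mathcal M}$ in (4.7) is automatic once the linear condition in (4.16) holds is a useful clarification: it confirms that the double sum over $(\nu,u)$ followed by $(m,l)$ really collapses to the single sum over $(m,l)$ displayed in (4.16), a point the paper leaves implicit.
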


\section{Eigenvector of Dwork-Frobenius}

The following result is key to the proof of Theorem 2.13.
\begin{theorem}
If $A'$ is minimal for $C(A)^\circ$, then the series $G(\Lambda,x)$ is an eigenvector of $\alpha^*$ with eigenvalue $p^M$:
\[ \alpha^*\big(G(\Lambda,x)\big) = p^M G(\Lambda,x). \]
\end{theorem}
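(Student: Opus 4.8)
The plan is to verify the eigenvector equation coefficient-by-coefficient in $x$, after rewriting $\alpha^*\bigl(G(\Lambda,x)\bigr)$ so that its product structure over $j=1,\dots,N$ becomes visible, and then to reduce everything to a single one-variable identity. By (3.11) and (4.13), $G(\Lambda,x)=\sum_{u\in{\mathcal M}}G_u(\Lambda)\gamma_0^{u_n}x^u$ and $\alpha^*\bigl(G(\Lambda,x)\bigr)=\sum_{\rho\in{\mathcal M}}\eta_\rho(\Lambda)\gamma_0^{\rho_n}x^\rho$, so the assertion $\alpha^*\bigl(G(\Lambda,x)\bigr)=p^MG(\Lambda,x)$ is equivalent to $\eta_\rho(\Lambda)=p^MG_\rho(\Lambda)$ for every $\rho\in{\mathcal M}$. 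I would begin with a ``$\delta_{\mathcal M}$-commuting'' step. Set
\[
P(\Lambda,x)=\Bigl(\prod_{j=1}^MQ(\Lambda_jx^{{\bf a}_j})\Bigr)\Bigl(\prod_{j=M+1}^N\hat{\theta}(\Lambda_jx^{{\bf a}_j})\Bigr),
\]
so $G(\Lambda,x)=\delta_{\mathcal M}\bigl(P(\Lambda,x)\bigr)$ by (3.22), and hence $G(\Lambda^p,x^p)=\delta_{p{\mathcal M}}\bigl(P(\Lambda^p,x^p)\bigr)$. Since $\theta(\Lambda,x)$ is supported on exponents in ${\mathbb N}A\subseteq C(A)$, and $C(A)^\circ$ is stable under positive scaling and under addition of elements of $C(A)$, multiplication by $\theta(\Lambda,x)$ sends any exponent of $P(\Lambda^p,x^p)$ lying outside $p{\mathcal M}$ to an exponent outside ${\mathcal M}$; therefore $\alpha^*\bigl(G(\Lambda,x)\bigr)=\delta_{\mathcal M}\bigl(\theta(\Lambda,x)P(\Lambda^p,x^p)\bigr)$. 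Using $\theta(t)=\hat{\theta}(t)/\hat{\theta}(t^p)$ the factor for each $j>M$ collapses, $\theta(\Lambda_jx^{{\bf a}_j})\hat{\theta}(\Lambda_j^px^{p{\bf a}_j})=\hat{\theta}(\Lambda_jx^{{\bf a}_j})$, giving
\[
\alpha^*\bigl(G(\Lambda,x)\bigr)=\delta_{\mathcal M}\Bigl(\prod_{j=1}^M\bigl[\theta(\Lambda_jx^{{\bf a}_j})Q(\Lambda_j^px^{p{\bf a}_j})\bigr]\ \prod_{j=M+1}^N\hat{\theta}(\Lambda_jx^{{\bf a}_j})\Bigr).
\]

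Next I would apply Lemma~2.12. In each monomial of the last product the exponent of $\Lambda_j$ coincides with the coefficient of ${\bf a}_j$ in its $x$-exponent; for $j\le M$ the factor $\theta(\Lambda_jx^{{\bf a}_j})Q(\Lambda_j^px^{p{\bf a}_j})$ contributes an integral power of $x^{{\bf a}_j}$, and for $j>M$ the factor $\hat{\theta}(\Lambda_jx^{{\bf a}_j})$ contributes a nonnegative power, so by Lemma~2.12 a monomial whose $x$-exponent lies in ${\mathcal M}$ must have $x^{{\bf a}_j}$ to a negative power for every $j\le M$. Consequently $\delta_{\mathcal M}$ in the display above is unchanged if each factor $\theta(\Lambda_jx^{{\bf a}_j})Q(\Lambda_j^px^{p{\bf a}_j})$, $j\le M$, is first replaced by the sum of those of its terms in which $x^{{\bf a}_j}$ occurs to a negative power. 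The crux is then the one-variable identity
\[
\bigl[\text{sum of the negative powers of }t\text{ in }\theta(t)Q(t^p)\bigr]=p\,Q(t).
\]
Granting this, each $j\le M$ factor reduces to $p\,Q(\Lambda_jx^{{\bf a}_j})$ (since $Q(t)$ has only negative powers of $t$), and therefore
\[
\alpha^*\bigl(G(\Lambda,x)\bigr)=\delta_{\mathcal M}\Bigl(\prod_{j=1}^Mp\,Q(\Lambda_jx^{{\bf a}_j})\ \prod_{j=M+1}^N\hat{\theta}(\Lambda_jx^{{\bf a}_j})\Bigr)=p^M\delta_{\mathcal M}\bigl(P(\Lambda,x)\bigr)=p^MG(\Lambda,x),
\]
the factor $p^M$ emerging precisely because each of the $M$ ``$\zeta$-type'' factors contributes one power of $p$.

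The main obstacle is this one-variable identity. Written out (equivalently, obtained by matching (4.16) against (3.20) factor by factor) it is the family of scalar identities
\[
\sum_{\substack{l\ge 0\\ p(l+1)\ge k}}(-1)^l\,\sigma_l\,\gamma_0^{-l-1}\,\theta_{p(l+1)-k}\,l!\ =\ p\,(-1)^{k-1}\,\sigma_{k-1}\,\gamma_0^{-k}\,(k-1)!\qquad(k\ge 1);
\]
the left side converges by (3.2) and the $p$-integrality of $\sigma_l$, and already for $k=1$ its leading term $\theta_{p-1}\sigma_0\gamma_0^{-1}=\sigma_0\gamma_0^{p-2}/(p-1)!$ agrees with the right side $p\,\sigma_0\gamma_0^{-1}$ to leading $p$-adic order, by Wilson's theorem together with $\gamma_0^{p-1}\equiv-p$ modulo terms of higher order. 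To prove the identity in full I would combine the description $Q(t)=\bigl[\text{negative part of }\zeta(\gamma_0t)\hat{\theta}_1(t)\bigr]$ with the relations $\hat{\theta}(t)=\theta(t)\hat{\theta}(t^p)$ and $\hat{\theta}(t)=\exp(\gamma_0t)\hat{\theta}_1(t)$ — which yield $\theta(t)\hat{\theta}_1(t^p)=\exp\bigl(\gamma_0(t-t^p)\bigr)\hat{\theta}_1(t)$, and hence $\theta(t)\bigl(\zeta(\gamma_0t^p)\hat{\theta}_1(t^p)\bigr)=\zeta(\gamma_0t^p)\exp\bigl(\gamma_0(t-t^p)\bigr)\hat{\theta}_1(t)$ — and the differential equation $\bigl(\gamma_0t-t\tfrac{d}{dt}\bigr)\zeta(\gamma_0t)=1$ satisfied by $\zeta(\gamma_0t)$; it is the inhomogeneous term $1$ that forces the factor $p$ to appear. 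In spirit this is the factorial-ratio analogue of the classical fact that the Dwork-Frobenius operator acts on the $\zeta$-type solution of the associated hypergeometric-type equation with a controlled eigenvalue, and calculations of this kind appear in \cite{AS2}.
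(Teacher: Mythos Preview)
Your approach is essentially the paper's own: remove the inner $\delta_{\mathcal M}$ (the paper does this via Lemma~2.12 just as you do), collapse the $j>M$ factors using $\theta(t)\hat{\theta}(t^p)=\hat{\theta}(t)$, and then invoke the one-variable identity $\delta_-\bigl(\theta(t)Q(t^p)\bigr)=pQ(t)$ together with Lemma~2.12 to turn each $j\le M$ factor into $pQ(\Lambda_jx^{{\bf a}_j})$. Two remarks are worth making.

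First, the one-variable identity you isolate is exactly \cite[Proposition~6.10]{AS2} (quoted in the paper as (5.2)), so you need not re-derive it; your sketch via the differential equation for $\zeta(\gamma_0t)$ and the relation $\theta(t)\hat{\theta}_1(t^p)=\exp\bigl(\gamma_0(t-t^p)\bigr)\hat{\theta}_1(t)$ is plausible but not carried through, and in any case is unnecessary here.

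Second, there is a genuine delicacy you pass over that the paper handles explicitly. After removing the inner $\delta_{\mathcal M}$ you write $\theta(\Lambda,x)P(\Lambda^p,x^p)$ as a product over $j$ and then regroup. But each factor $\theta(\Lambda_jx^{{\bf a}_j})Q(\Lambda_j^px^{p{\bf a}_j})$ is a doubly infinite Laurent series in $x^{{\bf a}_j}$ (only $p$-adically convergent, not a finite sum in each degree), and such objects do not form a commutative ring; the paper says exactly this and therefore does \emph{not} manipulate the product formally. Instead it computes the coefficient of $x^\rho$ on both sides independently---Proposition~4.15 for $\alpha^*\bigl(G(\Lambda,x)\bigr)$ and (5.11)--(5.13) for the left-hand side of (5.9)---and observes that the two expressions coincide term by term. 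Your argument becomes rigorous once you either (i) follow the paper and compare coefficients explicitly, or (ii) justify that after the Lemma~2.12 truncation each $j\le M$ factor is already $pQ(\Lambda_jx^{{\bf a}_j})$, a one-sided series, so the final product lives in an honest formal Laurent ring and the regrouping is legitimate. Option~(ii) is essentially what you intend, and it works; but it should be stated, since that is precisely the step the authors flag as requiring care.
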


We begin by recalling a result from \cite{AS2}.  Define an operator $\delta_-$ on formal Laurent series in one variable $t$ by the formula
\[ \delta_-\bigg(\sum_{i=-\infty}^\infty c_it^i\bigg) = \sum_{i=-\infty}^{-1} c_it^i. \]
The following equality is \cite[Proposition 6.10]{AS2}:
\begin{equation}
\delta_-\big(\theta(t)Q(t^p)\big) = pQ(t).
\end{equation}
This equation is equivalent to the assertion that
\begin{equation}
\theta(t)Q(t^p)= A(t) + pQ(t)
\end{equation}
for some series $A(t)$ in nonnegative powers of $t$.  Replacing $t$ in this equation by $\Lambda_jx^{{\bf a}_j}$ for $j=1,\dots,M$ and multiplying gives
\begin{equation}
\prod_{j=1}^M \bigg(\theta(\Lambda_jx^{{\bf a}_j})Q(\Lambda_j^p x^{p{\bf a}_j})\bigg) = \prod_{j=1}^M \bigg( A(\Lambda_jx^{{\bf a}_j}) + pQ(\Lambda_j x^{{\bf a}_j})\bigg),
\end{equation}
where $A(\Lambda_jx^{{\bf a}_j})$ is a series in nonnegative powers of $x^{{\bf a}_j}$.  

\begin{lemma}
If $A'$ is minimal for $C(A)^\circ$, then 
\begin{equation}
\delta_{\mathcal M}\prod_{j=1}^M \bigg(\theta(\Lambda_jx^{{\bf a}_j})Q(\Lambda_j^p x^{p{\bf a}_j})\bigg) = p^M\prod_{j=1}^M Q(\Lambda_jx^{{\bf a}_j}).
\end{equation}
\end{lemma}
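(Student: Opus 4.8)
The plan is to expand the product on the left-hand side of (5.4) using (5.3) and then extract the part of the resulting series that involves only monomials $x^u$ with $u\in{\mathcal M}$. Multiplying out $\prod_{j=1}^M\bigl(A(\Lambda_jx^{{\bf a}_j})+pQ(\Lambda_jx^{{\bf a}_j})\bigr)$ gives a sum over subsets $S\subseteq\{1,\dots,M\}$ of terms of the form $p^{M-|S|}\prod_{j\in S}A(\Lambda_jx^{{\bf a}_j})\prod_{j\notin S}Q(\Lambda_jx^{{\bf a}_j})$. The term $S=\emptyset$ contributes exactly $p^M\prod_{j=1}^M Q(\Lambda_jx^{{\bf a}_j})$, which (since each $Q$ is a series in \emph{negative} powers of $x^{{\bf a}_j}$) lies entirely in the span of monomials $x^u$ with $u\in-C(A)^\circ$, hence survives $\delta_{\mathcal M}$. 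So it remains to show that every term with $S\neq\emptyset$ is annihilated by $\delta_{\mathcal M}$, i.e. contains no monomial $x^u$ with $u\in{\mathcal M}$.

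The key step is thus the following claim: if $S\neq\emptyset$, then in the product $\prod_{j\in S}A(\Lambda_jx^{{\bf a}_j})\prod_{j\notin S}Q(\Lambda_jx^{{\bf a}_j})$ no monomial $x^u$ with $u\in{\mathcal M}=(-C(A)^\circ)\cap{\mathbb Z}A$ can appear. To see this, note that a monomial in this product has exponent $u=\sum_{j\in S}l_j{\bf a}_j+\sum_{j\notin S}(-l_j-1){\bf a}_j$ with all $l_j\in{\mathbb N}$ (the $A(\Lambda_jx^{{\bf a}_j})$ contributing nonnegative powers of $x^{{\bf a}_j}$, the $Q(\Lambda_jx^{{\bf a}_j})$ contributing powers $-l_j-1<0$). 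Writing $u=\sum_{j=1}^N l_j'{\bf a}_j$ with $l_j'=l_j\ge0$ for $j\in S$, $l_j'=-l_j-1<0$ for $j\in\{1,\dots,M\}\setminus S$, and $l_j'=0$ for $j>M$, we are in the situation of Lemma 2.12 (with the roles of the indices adjusted): if $u\in{\mathcal M}$ then Lemma 2.12 forces $l_j'<0$ for \emph{all} $j=1,\dots,M$. But for $j\in S$ we have $l_j'\ge0$, a contradiction. Hence no such $u$ occurs, the term is killed by $\delta_{\mathcal M}$, and only the $S=\emptyset$ term remains, giving (5.4).

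I expect the main (really the only) subtlety is the bookkeeping needed to apply Lemma 2.12: that lemma is stated for a representation $u=\sum l_j{\bf a}_j$ in which the last $N-M$ coefficients are required to be nonnegative, so one must check that after relabeling, the nonnegativity of the coefficients of ${\bf a}_j$ for $j\in S\cup\{M+1,\dots,N\}$ matches the hypothesis of Lemma 2.12 with the partition $\{1,\dots,M\}$ replaced by $\{1,\dots,M\}\setminus S$. Since $A'$ being minimal for $C(A)^\circ$ is exactly equivalent to its sub-collections not reaching $-{\mathcal M}$, the argument goes through for any nonempty $S$, and the positivity of the coefficient from an $A$-factor is precisely what produces the contradiction. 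The remaining points — that $A(t)$ has only nonnegative powers of $t$, that $Q(t)$ has only negative powers, and that $\delta_{\mathcal M}$ is linear — are immediate from (5.2), (3.12), and the definition of $\delta_{\mathcal M}$.
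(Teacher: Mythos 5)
Your proof is correct and follows essentially the same argument the paper uses (the paper's proof is one line: expand the product $\prod_{j=1}^M\bigl(A(\Lambda_jx^{{\bf a}_j})+pQ(\Lambda_jx^{{\bf a}_j})\bigr)$ and invoke Lemma 2.12 to kill every term except $S=\emptyset$). One small remark: the ``index-relabeling'' worry in your final paragraph is unfounded, since Lemma 2.12 as stated already allows $l_j\in\mathbb{Z}$ of either sign for $j\le M$, so the hypothesis applies directly with the original partition and the nonnegativity of $l_j'$ for $j\in S$ immediately contradicts the conclusion.
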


\begin{proof}
It follows from Lemma 2.12 that when the product on the right-hand side of~(5.4) is expanded, all terms except for $\prod_{j=1}^M pQ(\Lambda_jx^{{\bf a}_j})$ are annihilated by $\delta_{\mathcal M}$.
\end{proof}

Since $\theta(t)\hat{\theta}(t^p) = \hat{\theta}(t)$, we have trivially
\begin{equation}
\prod_{j=M+1}^N \theta(\Lambda_jx^{{\bf a}_j})\hat{\theta}(\Lambda_j^px^{p{\bf a}_j}) = \prod_{j=M+1}^N \hat{\theta}(\Lambda_jx^{{\bf a}_j}).
\end{equation}
\begin{corollary}
If $A'$ is minimal for $C(A)^\circ$, then
\begin{multline}
\delta_{\mathcal M} \bigg(\bigg(\prod_{j=1}^M \theta(\Lambda_jx^{{\bf a}_j})Q(\Lambda_j^p x^{p{\bf a}_j})\bigg)\bigg(\prod_{j=M+1}^N \theta(\Lambda_jx^{{\bf a}_j})\hat{\theta}(\Lambda_j^px^{p{\bf a}_j}) \bigg)\bigg) \\
= p^MG(\Lambda,x).
\end{multline}
\end{corollary}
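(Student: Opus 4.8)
The plan is to combine the two previous displays, (5.5) and (5.6), by multiplying them, and then to recognize the product of the left-hand sides as $\theta(\Lambda,x)G(\Lambda^p,x^p)$ up to the action of $\delta_{\mathcal M}$. First I would write out $\theta(\Lambda,x) = \prod_{j=1}^N \theta(\Lambda_jx^{{\bf a}_j})$ from (4.1) and split this product as $\bigl(\prod_{j=1}^M \theta(\Lambda_jx^{{\bf a}_j})\bigr)\bigl(\prod_{j=M+1}^N \theta(\Lambda_jx^{{\bf a}_j})\bigr)$. Next, from the compact form (3.21)--(3.22) of $G(\Lambda,x)$, namely
\[ G(\Lambda,x) = \delta_{\mathcal M}\bigg(\bigg(\prod_{j=1}^M Q(\Lambda_jx^{{\bf a}_j})\bigg)\bigg(\prod_{j=M+1}^N \hat{\theta}(\Lambda_jx^{{\bf a}_j})\bigg)\bigg), \]
I would substitute $\Lambda \mapsto \Lambda^p$, $x\mapsto x^p$ to get an expression for $G(\Lambda^p,x^p)$ as a sum over ${\mathcal M}$ of monomials in $\Lambda_j^px^{p{\bf a}_j}$; multiplying by $\theta(\Lambda,x)$ term-by-term and then applying $\delta_{\mathcal M}$ gives exactly the left-hand side of (5.6) — this is the content of the definition (4.5) of $\alpha^*$ together with the fact that $Q(\Lambda_j^px^{p{\bf a}_j})$ and $\hat\theta(\Lambda_j^px^{p{\bf a}_j})$ are the pieces of $G(\Lambda^p,x^p)$.

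The key step is then to observe that the left-hand side of (5.6) factors across the index ranges $\{1,\dots,M\}$ and $\{M+1,\dots,N\}$ in a way compatible with $\delta_{\mathcal M}$: by Lemma 2.12, to produce a monomial $x^u$ with $u\in{\mathcal M}$ one must take strictly negative powers of each $x^{{\bf a}_j}$ for $j=1,\dots,M$, and those negative powers must come entirely from the $Q(\Lambda_j^px^{p{\bf a}_j})$-factors (the $\theta$-factors contribute only nonnegative powers). So I would first apply $\delta_{\mathcal M}$ to the $j=1,\dots,M$ block using Lemma 5.5 — or rather its proof, via the expansion (5.3)--(5.4) — to replace $\prod_{j=1}^M\theta(\Lambda_jx^{{\bf a}_j})Q(\Lambda_j^px^{p{\bf a}_j})$ effectively by $p^M\prod_{j=1}^M Q(\Lambda_jx^{{\bf a}_j})$, while the $j=M+1,\dots,N$ block collapses by the trivial identity (5.5) to $\prod_{j=M+1}^N \hat\theta(\Lambda_jx^{{\bf a}_j})$. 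What remains is $\delta_{\mathcal M}$ applied to $p^M\bigl(\prod_{j=1}^M Q(\Lambda_jx^{{\bf a}_j})\bigr)\bigl(\prod_{j=M+1}^N \hat\theta(\Lambda_jx^{{\bf a}_j})\bigr)$, which is $p^MG(\Lambda,x)$ by (3.21).

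The main obstacle is making rigorous the factorization of $\delta_{\mathcal M}$ across the two blocks of indices — i.e., justifying that one may apply Lemma 5.5 to the first block \emph{inside} the larger product without disturbing the second block. The point to argue carefully is that when the full product on the left of (5.6) is expanded, a term survives $\delta_{\mathcal M}$ only if the total $x$-exponent lies in $-C(A)^\circ\cap{\mathbb Z}A$; since the second block and the $A(\Lambda_jx^{{\bf a}_j})$ summands from (5.3) all contribute $x$-exponents in $C(A)$ (nonnegative combinations of the ${\bf a}_j$), Lemma 2.12 forces every surviving term to come from $\prod_{j=1}^M pQ(\Lambda_jx^{{\bf a}_j})$ in the first block, exactly as in the proof of Lemma 5.5. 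Once this is spelled out the corollary follows immediately; I do not expect any genuinely new estimate to be needed beyond what Lemma 5.5 and (5.5) already supply.
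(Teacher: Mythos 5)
Your proposal follows the paper's proof exactly: you insert $\delta_{\mathcal M}$ around the $j\le M$ block, justified (as in the paper) by Lemma 2.12 together with the observation that the $j>M$ factors and the $A(\Lambda_jx^{\mathbf a_j})$ summands contribute only nonnegative powers of the $x^{\mathbf a_j}$, so every surviving term must take $pQ$ in each of the first $M$ slots; you then invoke Lemma 5.5, the identity $\theta(t)\hat\theta(t^p)=\hat\theta(t)$, and the compact formula (3.22) for $G(\Lambda,x)$, which is precisely the paper's chain of reductions. The only cosmetic difference is your opening paragraph, which frames the product as $\theta(\Lambda,x)G(\Lambda^p,x^p)$ and appeals loosely to the definition of $\alpha^*$ — that motivational remark belongs to the proof of Theorem 5.1 rather than to this corollary and is not needed here, but it does no harm since your actual argument does not rely on it.
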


\begin{proof}
It follows from Lemma 2.12 that the left-hand side of (5.9) equals
\[ \delta_{\mathcal M} \bigg(\bigg(\delta_{\mathcal M}\prod_{j=1}^M \theta(\Lambda_jx^{{\bf a}_j})Q(\Lambda_j^p x^{p{\bf a}_j})\bigg)\bigg(\prod_{j=M+1}^N \theta(\Lambda_jx^{{\bf a}_j})\hat{\theta}(\Lambda_j^px^{p{\bf a}_j}) \bigg)\bigg) \]
By (5.6) and (5.7) this equals
\[ \delta_{\mathcal M}\bigg(p^M\prod_{j=1}^M Q(\Lambda_jx^{{\bf a}_j})\prod_{j=M+1}^N \hat{\theta}(\Lambda_jx^{{\bf a}_j})\bigg). \]
The assertion of the corollary now follows from (3.22)
\end{proof}

\begin{proof}[Proof of Theorem 5.1]
From (3.22) and (4.6) we have
\begin{multline*}
\alpha^*\big(G(\Lambda,x)\big) = \\ 
\delta_{\mathcal M} \bigg( \bigg(\prod_{j=1}^N \theta(\Lambda_jx^{{\bf a}_j})\bigg) \bigg(\delta_{\mathcal M}\bigg(\prod_{j=1}^M Q(\Lambda_j^p x^{p{\bf a}_j})\prod_{j=M+1}^N \hat{\theta}(\Lambda_j^p x^{p{\bf a}_j})\bigg)\bigg) \bigg).
\end{multline*}
By Lemma 2.12 we may omit the second ``$\delta_{\mathcal M}$'' to get
\begin{multline}
\alpha^*\big(G(\Lambda,x)\big) = \\ 
\delta_{\mathcal M} \bigg( \bigg(\prod_{j=1}^N \theta(\Lambda_jx^{{\bf a}_j})\bigg) \bigg(\bigg(\prod_{j=1}^M Q(\Lambda_j^p x^{p{\bf a}_j})\prod_{j=M+1}^N \hat{\theta}(\Lambda_j^p x^{p{\bf a}_j})\bigg) \bigg)\bigg).
\end{multline}
To prove Theorem 5.1, it thus suffices to show that the left-hand side of (5.9) equals the right-hand side of (5.10).
Formally, the products involved are just rearrangements of each other.  However, we do not know a commutative ring that contains all these factors, so we prove equality by computing each separately and verifying that the results are the same.
For the right-hand side of (5.10), the result of this calculation was given in Proposition~4.15, so it remains only to perform this calculation for the left-hand side of (5.9).

Consider first a product $\theta(\Lambda_jx^{{\bf a}_j})Q(\Lambda_j^px^{p{\bf a}_j})$, $j=1,\dots,M$.  By Lem\-ma~2.12 we can ignore all terms in this product with $x^{{\bf a}_j}$ raised to a nonnegative power, as all monomials to which they contribute will be annihilated by $\delta_{\mathcal M}$.  From the definitions, for $\rho'\in{\mathcal M}$, the coefficient of $x^{\rho'}$ in $\prod_{j=1}^M \theta(\Lambda_jx^{{\bf a}_j})Q(\Lambda_j^px^{p{\bf a}_j})$ is
\begin{equation}
\sum_{\substack{m,l\in{\mathbb N}^M\\ \sum_{j=1}^M (m_j+p(-l_j-1)){\bf a}_j = \rho'}}\prod_{j=1}^M \theta_{m_j}(-1)^{l_j}l_j!\sigma_{l_j}\gamma_0^{-l_j-1}\prod_{j=1}^M \Lambda_j^{m_j+p(-l_j-1)}.
\end{equation}
For $\rho''\in{\mathbb N}A$, the coefficient of $x^{\rho''}$ in $\theta(\Lambda_jx^{{\bf a}_j})\hat{\theta}(\Lambda_j^px^{p{\bf a}_j})$ is (using (3.23))
\begin{equation}
\sum_{\substack{m,l\in{\mathbb N}^{N-M}\\ \sum_{j=M+1}^N (m_j+pl_j){\bf a}_j = \rho''}} \prod_{j=M+1}^N \frac{\theta_{m_j}\tau_{l_j}\gamma_0^{l_j}}{l_j!} \prod_{j=M+1}^N \Lambda_j^{m_j+pl_j}.
\end{equation}
Therefore, for $\rho\in {\mathcal M}$, the coefficient of $x^{\rho}$ on the left-hand side of (5.9) is
\begin{multline}
\sum_{\substack{m,l\in{\mathbb M}^N\\ \sum_{j=1}^M (m_j+p(-l_j-1)){\bf a}_j + \\ \sum_{j=M+1}^N (m_j+pl_j){\bf a}_j = \rho}} (-1)^{\sum_{j=1}^M l_j}\sigma(l) \prod_{j=1}^M \theta_{m_j}l_j!\gamma_0^{-l_j-1}\prod_{j=M+1}^N \frac{\theta_{m_j}\gamma_0^{l_j}}{l_j!}\\
\cdot\prod_{j=1}^M \Lambda_j^{m_j+p(-l_j-1)}\prod_{j=M+1}^N \Lambda_j^{m_j+pl_j}.
\end{multline}

This expression equals (4.16), which proves Theorem 5.1.
\end{proof}

\section{Proof of Theorem 2.13}

From Theorem 5.1 and Equation (4.14) we get the equality 
\begin{equation}
p^MG_\rho(\Lambda) = \sum_{\substack{\nu\in{\mathbb N}A,\,u\in{\mathcal M}\\ G_u(\Lambda)\neq 0,\, \nu+pu=\rho}} \gamma_0^{-\rho_n+u_n}\theta_\nu(\Lambda)G_u(\Lambda^p).
\end{equation}
From (4.3) and (4.5) every coefficient of $\theta_\nu(\Lambda)$ is divisible by $\gamma_0^{\nu_n}$.  If we put $\tilde{\theta}_\nu(\Lambda) = \gamma_0^{-\nu_n}\theta_\nu(\Lambda)$, a polynomial with $p$-integral coefficients, then we can rewrite this as
\begin{equation}
p^MG_\rho(\Lambda) = \sum_{\substack{\nu\in{\mathbb N}A,\,u\in{\mathcal M}\\ G_u(\Lambda)\neq 0,\,\nu+pu=\rho}} \gamma_0^{-\rho_n+u_n+\nu_n}\tilde{\theta}_\nu(\Lambda)G_u(\Lambda^p).
\end{equation}
By (2.2) and (3.20) we have $F_u(\Lambda)=0$ if and only if $G_u(\Lambda)=0$, so the hypothesis of Theorem 2.13 is equivalent to the condition that
\[ M=\min\{-u_n\mid u=(u_1,\dots,u_n)\in{\mathcal M}\text{ and }G_u(\Lambda)\neq 0\}. \]
This implies that $u_n\leq -M$ for $u\in{\mathcal M}$ such that $G_u(\Lambda)\neq 0$.  
The condition $\nu+pu=\rho$ implies that for such $u$
\begin{equation}
 -\rho_n+u_n+\nu_n = -(p-1)u_n= (p-1)M + (p-1)u_n', 
\end{equation}
where $u_n':=-u_n-M\in{\mathbb N}$.  Define $\tilde{\gamma} = \gamma_0^{(p-1)M}/p^M$, a $p$-adic unit.  From (6.2) and~(6.3) we get
\begin{equation}
G_\rho(\Lambda) = \sum_{\substack{\nu\in{\mathbb N}A,\,u\in{\mathcal M}\\ G_u(\Lambda)\neq 0,\, \nu+pu=\rho}} \tilde{\gamma}\gamma_0^{(p-1)u_n'}\tilde{\theta}_\nu(\Lambda)G_u(\Lambda^p),
\end{equation}
and all coefficients of the polynomial $\tilde{\gamma}\gamma_0^{(p-1)u'_n}\tilde{\theta}_\nu(\Lambda)$ are $p$-integral.

Consider a monomial $\Lambda^l$ in $G_u(\Lambda)$.  If $\sum_{j=M+1}^N l_j = 0$, then $l_j=0$ for $j=M+1,\dots,N$, so by (3.20) the coefficient of $\Lambda^l$ in $G_u(\Lambda)$ is $p$-integral for all $u\in{\mathcal M}$.  We proceed by induction on $d:=\sum_{j=M+1}^N l_j$.  Let $\rho\in{\mathcal M}$ and let $\Lambda^l$ be a monomial in $G_\rho(\Lambda)$ with $\sum_{j=M+1}^N l_j = d>0$ and suppose that all monomials $\Lambda^{l'}$ in all $G_u(\Lambda)$, $u\in{\mathcal M}$, with $\sum_{j=M+1}^N l'_j < d$ have $p$-integral coefficients.  The right-hand side of Equation (6.4) gives a formula for the coefficient of $\Lambda^l$ in $G_\rho(\Lambda)$.  But only monomials $\Lambda^{l'}$ of $G_u(\Lambda)$ with $\sum_{j=M+1}^N l'_j\leq d/p$ can contribute to this formula.  All these monomials have $p$-integral coefficients by the induction hypothesis, so (6.4) implies that the coefficient of $\Lambda^l$ in $G_\rho(\Lambda)$ is $p$-integral.

\section{Examples}

{\bf Example 1.}
Let $\{x^{{\bf b}_j}\}_{j=1}^N$ be the set of all monomials of degree $d$ in variables $x_0,\dots,x_n$ (so $N=\binom{d+n}{n}$) and let
\begin{equation}
f(x) = \sum_{j=1}^N \Lambda_jx^{{\bf b}_j} \in{\mathbb C}[\Lambda_1,\dots,\Lambda_N][x_0,\dots,x_n],
\end{equation}
the generic homogeneous polynomial of degree $d$ in $x_0,\dots,x_n$.  Put 
\[ {\bf a}_j = ({\bf b}_j,1)\in{\mathbb Z}^{n+2} \]
and let $A = \{{\bf a}_j\}_{j=1}^N$.  We use $u_0,\dots,u_{n+1}$ as the coordinate functions on ${\mathbb R}^{n+2}$.  The cone $C(A)$ lies in the hyperplane $\sum_{i=0}^n u_i = du_{n+1}$ and
${\mathbb Z}A$ consists of all lattice points on this hyperplane.  Thus
\begin{equation}
C(A)^\circ\cap{\mathbb Z}A = \bigg\{(u_0,\dots,u_{n+1})\mid \text{$\sum_{i=0}^n u_i=du_{n+1}$ and $u_i>0$ for all $i$}\bigg\}.
\end{equation}
In particular, 
\begin{equation}
\bigg\lceil \frac{n+1}{d}\bigg\rceil = \min\{ u_{n+1} \mid u=(u_0,\dots,u_{n+1})\in C(A)^\circ\cap{\mathbb Z}A\},
\end{equation}
where $\lceil r\rceil$ denotes the least integer greater than or equal to the real number $r$.

Put $M=\lceil \frac{n+1}{d}\rceil$.  There are many ways to choose a set $A = \{{\bf a}_j\}_{j=1}^M$ that satisfies the hypothesis of Theorem 2.13.  We give one example to illustrate the possibilities.  For $j=1,\dots,M-1$ take
\[ {\bf a}_j = (0,\dots,0,1,\dots,1,0,\dots,0,1), \]
where the middle group of $d$ ones occurs in positions $d(j-1),\dots,jd-1$, and take ${\bf a}_M$ to be any element of $A$ having nonzero entries in positions $(M-1)d,\dots,n$.  Then $\beta = \sum_{j=1}^M {\bf a}_j\in C(A)^\circ$ by (7.2), and by (7.3) the hypothesis of Theorem~2.13 is satisfied.  All the series (2.2) then have integral coefficients.  One can show that each $F_u(\Lambda)$, $u\in{\mathcal M}$, is a solution of the Picard-Fuchs equation of the hypersurface $f=0$ relative to an appropriate basis for $H^{n-1}_{\rm DR}(X/{\mathbb C}(\Lambda))$ (the basis depends on the choice of $u$).

We give an explicit example of this type where we set some of the $\Lambda_j$ equal to~0 to simplify formulas.  Consider the family of cubic surfaces in ${\mathbb P}^3$ defined by the polynomial
\[ f(x_0,\dots,x_3) = \Lambda_1 x_0x_1x_2 + \Lambda_2 x_1x_2x_3 + \sum_{i=3}^6 \Lambda_ix_{i-3}^3. \]
Then $A=\{{\bf a}_j\}_{j=1}^6\subseteq{\mathbb Z}^5$, where ${\bf a}_1 = (1,1,1,0,1)$, ${\bf a}_2 = (0,1,1,1,1)$, ${\bf a}_3 = (3,0,0,0,1)$, ${\bf a}_4 = (0,3,0,0,1)$, ${\bf a}_5 = (0,0,3,0,1)$, and ${\bf a}_6 = (0,0,0,3,1)$.  Take $A' = \{{\bf a}_1,{\bf a}_2\}$, so $\beta = (1,2,2,1,2)\in C(A)^\circ$.  Let $u=(u_1,\dots,u_5)\in C(A)^\circ\cap{\mathbb Z}^5$, i.~e., all $u_i$ are $>0$ and $\sum_{i=1}^4 u_i = 3u_5$.  From Equation (2.2), the series $F_{-u}(\Lambda)$ is zero unless both $u_1+u_4-u_2$ and $u_1+u_4-u_3$ are divisible by $3$, in which case the coefficients of $F_{-u}(\Lambda)$ can be expressed in terms of $l_3$ and $l_6$:
\begin{equation}
\frac{(3l_3+u_1-1)!(3l_6+u_4-1)!}{l_3!l_6!(l_3+l_6+\frac{u_1+u_4-u_2}{3})!(l_3+l_6+\frac{u_1+u_4-u_3}{3})!}
\end{equation}
These coefficients are integral for all $l_3,l_6\in{\mathbb N}$ by the above discussion.  For example, when $u=\beta$, this gives the integrality of the ratios
\[ \frac{(3l_3)!(3l_6)!}{l_3!l_6!(l_3+l_6)!^2}. \]
Taking $u=(2,1,1,2,2)$, for example, gives the integrality of the ratios
\[ \frac{(3l_3+1)!(3l_6+1)!}{l_3!l_6!(l_3+l_6+1)!^2}. \]
The first set of ratios was known to be integral by a theorem of Landau (see Example~2) but the integrality of the second set seems to be new.

{\bf Example 2.}
We generalize one direction of a classical result of Landau\cite{L} on the integrality of factorial ratios.  Let $c_{js}, d_{ks}\in{\mathbb N}$, $1\leq j\leq J$, $1\leq k\leq K$, $1\leq s\leq r$, and let
\begin{align}
C_j(x_1,\dots,x_r) &= \sum_{s=1}^r c_{js}x_s, \\
D_k(x_1,\dots,x_r) &= \sum_{s=1}^r d_{ks}x_s. 
\end{align}
To avoid trivial cases, we assume that no $C_j$ or $D_k$ is identically zero and that $C_j\neq D_k$ for all $j$ and $k$.  We also assume that for each $s$, some $c_{js}\neq 0$ or some $d_{ks}\neq 0$, i.~e., each variable $x_s$ appears in some $C_j$ or $D_k$ with nonzero coefficient.  We always make the hypothesis that
\begin{equation}
\sum_{j=1}^J C_j(x_1,\dots,x_r) = \sum_{k=1}^K D_k(x_1,\dots,x_r),
\end{equation}
i.~e.,
\begin{equation}
\sum_{j=1}^J c_{js} = \sum_{k=1}^K d_{ks} \quad\text{for $s=1,\dots,r$.}
\end{equation}
Consider the ratios
\begin{equation}
E(m_1,\dots,m_r):=\frac{\prod_{j=1}^J C_j(m_1,\dots,m_r)!}{\prod_{k=1}^K D_k(m_1,\dots,m_r)!}
\end{equation}
for $m_1,\dots,m_r\in{\mathbb N}$.  We identify these ratios as the coefficients of a hypergeometric series.

Put $n= r+J+K$.  Let ${\bf a}_1,\dots,{\bf a}_n$ be the standard unit basis vectors in ${\mathbb R}^n$ and for $s=1,\dots,r$ let
\[ {\bf a}_{n+s} = (0,\dots,0,1,0,\dots,0,c_{1s},\dots,c_{Js},-d_{1s},\dots,-d_{Ks}), \]
where the first $r$ coordinates have a $1$ in the $s$-th position and zeros elsewhere.  Our hypothesis that some $c_{js}$ or some $d_{ks}$ is nonzero implies that ${\bf a}_1,\dots,{\bf a}_{n+r}$ are all distinct.  Put $N=n+r$ and let $A = \{{\bf a}_i\}_{i=1}^{N}\subseteq {\mathbb Z}^n$.  
Put $M=r+J$ and $\beta = \sum_{j=1}^M {\bf a}_j = (1,\dots,1,0,\dots,0)$, where the ones occur in the first $M$ entries. 
From (2.2) we have
\begin{equation}
F_{-\beta}(\Lambda) = 
 \big(\Lambda_1\cdots\Lambda_M\big)^{-1}\sum_{\substack{l\in{\mathbb N}^N\\ -\sum_{j=1}^M l_j{\bf a}_j+\\ \sum_{j=M+1}^n l_j{\bf a}_j = {\bf 0}}} \frac{\prod_{j=1}^M l_j!}{\prod_{j=M+1}^N l_j!} \frac{\prod_{j=M+1}^N \Lambda_j^{l_j}}{\prod_{j=1}^M (-\Lambda_j)^{l_j}}.
\end{equation}

From the formulas for the ${\bf a}_j$ it is easy to find all solutions $l\in{\mathbb N}^N$ of the equation
\begin{equation}
-\sum_{j=1}^M l_j{\bf a}_j + \sum_{j=M+1}^N l_j{\bf a}_j = {\bf 0}
\end{equation}
(see \cite[Section 3]{AS4}).  An $N$-tuple $l\in{\mathbb N}^N$ satisfies (7.11) if and only if 
\begin{equation}
l=(m_1,\dots,m_r,C_1(m),\dots,C_J(m),D_1(m),\dots,D_K(m),m_1,\dots,m_r)
\end{equation}
for some $m=(m_1,\dots,m_r)\in{\mathbb N}^r$.  Equation (7.10) can thus be rewritten as
\begin{multline}
F_{-\beta}(\Lambda) = \\
\big(\Lambda_1\cdots\Lambda_M\big)^{-1} \sum_{m_1,\dots,m_r = 0}^\infty \frac{\prod_{j=1}^J C_j(m)!}{\prod_{k=1}^K D_k(m)!} \prod_{s=1}^r \frac{\Lambda_{n+s}^{m_s}}{(-\Lambda_s)^{m_s}}\frac{\prod_{k=1}^K \Lambda_{r+J+k}^{D_k(m)}}{\prod_{j=1}^J (-\Lambda_{r+j})^{C_j(m)}}.
\end{multline}
Thus the factorial ratios (7.9) are all integral if and only if the series $F_{-\beta}(\Lambda)$ has integral coefficients.

Landau\cite{L} has characterized the integrality of the ratios $E(m)$ (see \cite{AS4} for some additional comments on Landau's result).  Let $\lfloor r \rfloor$ denote the greatest integer less than or equal to the real number $r$.
\begin{theorem}
Assume that (7.8) holds.  One has $E(m_1,\dots,m_r)\in{\mathbb N}$ for all $m_1,\dots,m_r\in{\mathbb N}$ if and only if the step function
\begin{equation}
\Phi(x_1,\dots,x_r):=  \sum_{j=1}^J \lfloor C_j(x_1,\dots,x_r) \rfloor - \sum_{k=1}^K \lfloor D_k(x_1,\dots,x_r)\rfloor
\end{equation}
is $\geq 0$ for all $x_1,\dots,x_r\in [0,1)$.
\end{theorem}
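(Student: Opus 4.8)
The plan is to deduce Theorem~7.15 from Theorem~2.13 for the ``if'' direction, treating the ``only if'' direction as the classical half due to Landau~\cite{L}. By the discussion leading to~(7.14), for the data $A,A'$ of Example~2 one has $E(m_1,\dots,m_r)\in{\mathbb N}$ for all $m\in{\mathbb N}^r$ if and only if the series $F_{-\beta}(\Lambda)$ has integral coefficients, so it suffices to decide when $F_{-\beta}(\Lambda)$ is integral. Since the vectors ${\bf a}_i$ of Example~2 do not all have last coordinate $1$ but do all lie on the hyperplane $\sum_{i=1}^nu_i=1$, I would invoke the Remark following Theorem~2.13 and apply that theorem with the role of $-u_n$ played by $-\deg u:=-\sum_{i=1}^nu_i$; here $\deg\beta=M=r+J$ and ${\mathbb Z}A={\mathbb Z}^n$, since the standard basis vectors belong to $A$.

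For the ``if'' direction, assume $\Phi\ge 0$ on $[0,1)^r$. By the observation following Theorem~2.10, for $u=-v$ with $v\in C(A)^\circ\cap{\mathbb Z}^n$ one has $F_u(\Lambda)\ne 0$ exactly when the defining equation is solvable in ${\mathbb N}^N$; solving it triangularly in the free parameters $m=(l_{n+1},\dots,l_{n+r})\in{\mathbb N}^r$ (as for the homogeneous system in~(7.12)--(7.13)) shows this happens iff $D_k(m)\ge v_{r+J+k}$ for all $k$ for some $m$, and taking $m=(T,\dots,T)$ with $T$ large (legitimate since no $D_k$ is identically zero) shows this holds for every $v\in C(A)^\circ\cap{\mathbb Z}^n$. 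Thus verifying the hypothesis of Theorem~2.13 reduces to the purely combinatorial identity $\min\{\deg v\mid v\in C(A)^\circ\cap{\mathbb Z}^n\}=r+J$. The inequality ``$\le$'' is immediate, since $\beta=(1,\dots,1,0,\dots,0)$ lies in $C(A)^\circ$ (again because no $D_k$ is identically zero) and has degree $r+J$. For ``$\ge$'' I would prove the sharper statement that $\min\{\deg v\mid v\in C(A)^\circ\cap{\mathbb Z}^n\}=r+J+\min_{x\in[0,1)^r}\Phi(x)$, which, since $\Phi(0)=0$, forces the minimum to equal $r+J$ precisely when $\Phi\ge 0$. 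Granting this, Theorem~2.13 applies and yields that every $F_u(\Lambda)$, in particular $F_{-\beta}(\Lambda)$, has integral coefficients, so $E(m)\in{\mathbb N}$ for all $m$.

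The main obstacle is this sharp identity. I would approach it through the facet structure of $C(A)$: its dual cone is $\{\phi\ge 0: \phi_s+\sum_j c_{js}\phi_{r+j}\ge\sum_k d_{ks}\phi_{r+J+k}\ \text{for }1\le s\le r\}$, so the coordinate functionals $v\mapsto v_i$ for $1\le i\le r+J$ lie in the dual cone and $v\in C(A)^\circ$ forces $v_i\ge 1$ for those $i$, while the last $K$ coordinates of $v$ may be negative. Writing a degree-minimal representation $v=\sum_i\lambda_ie_i+\sum_s\mu_s{\bf a}_{n+s}$ with $\lambda_i,\mu_s\ge 0$, one sets $x_s=\{\mu_s\}$ and, using the balance relation~(7.8) rewritten as $\Phi(x)=\sum_k\{D_k(x)\}-\sum_j\{C_j(x)\}$, shows by a direct computation that $\deg v$ equals $r+J+\Phi(x_1,\dots,x_r)$ plus a nonnegative correction coming from the nonnegativity of the $\lambda_i$; conversely, starting from a point $x$ at which $\Phi$ is minimal one builds an interior lattice point of the corresponding degree. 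This combinatorial fact, or just its one-sided consequence needed for the ``if'' direction, may be extracted from \cite[Section~3]{AS4}.

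Finally, the ``only if'' direction is the classical half and is not produced by the machinery of this paper; it follows from Landau's original argument~\cite{L}. In outline, Legendre's formula gives ${\rm ord}_p\,E(m_1,\dots,m_r)=\sum_{i\ge 1}\Phi(\{m_1/p^i\},\dots,\{m_r/p^i\})$, so if $\Phi$ were negative at some point of $[0,1)^r$ one would choose, via Dirichlet's theorem on primes in arithmetic progressions, a large prime $p$ and integers $m_s<p$ for which the $i=1$ term of this sum is that negative value while all higher terms vanish, contradicting $E(m)\in{\mathbb N}$.
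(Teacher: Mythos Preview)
The paper does not itself prove Theorem~7.14: it is stated as Landau's classical result with a citation to~\cite{L}. What the paper contributes is the observation that the ``if'' direction is recovered (and strengthened) by Theorem~2.13, once one imports Proposition~7.16 from~\cite{AS4} to translate Landau's condition $\Phi\ge 0$ into the geometric hypothesis~(7.17); this is exactly Proposition~7.19. Your plan for the ``if'' direction follows this same route, and for the ``only if'' direction you, like the paper, defer to Landau's original argument. So your treatment coincides with the paper's.

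One minor simplification: your verification that $F_u(\Lambda)\neq 0$ for every $u\in\mathcal M$ is correct but unnecessary. Since $F_{-\beta}(\Lambda)\neq 0$ (take $m=0$ in~(7.13)) and $\deg\beta=M$, once you know $\min\{\deg v:v\in C(A)^\circ\cap\mathbb Z^n\}=M$ the minimum over $\{u:F_u\neq 0\}$ is automatically squeezed to~$M$; this is why the paper phrases the hypothesis as~(7.17) without any nonvanishing condition. Your sketch of the sharper identity $\min\{\deg v\}=r+J+\min_{[0,1)^r}\Phi$ is plausible and your justification that $\beta\in C(A)^\circ$ because no $D_k$ vanishes identically is correct, but the paper simply cites~\cite{AS4} for both of these facts rather than arguing them here.
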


To compare this result with Theorem 2.13, recall \cite[Theorem~2.1(a)]{AS4}: 
\begin{proposition}
Assume that (7.8) holds.  One has
\begin{equation}
M = \min\bigg\{\sum_{i=1}^n u_i \mid u=(u_1,\dots,u_n)\in C(A)^\circ\cap{\mathbb Z}^n\bigg\}
\end{equation}
if and only if $\Phi(x)\geq 0$ for all $x\in[0,1)^r$.  
\end{proposition}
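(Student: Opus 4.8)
The plan is to compute the number $M':=\min\{\sum_{i=1}^n u_i\mid u\in C(A)^\circ\cap{\mathbb Z}^n\}$ explicitly and compare it with $M=r+J$. Write $\deg(u)=\sum_{i=1}^n u_i$ and note first that, by the hypothesis $(7.8)$, one has $\deg({\bf a}_i)=1$ for every $i$: this is clear for the unit vectors ${\bf a}_1,\dots,{\bf a}_n$, and for ${\bf a}_{n+s}$ it is the statement $1+\sum_j c_{js}-\sum_k d_{ks}=1$. Thus $\deg(\beta)=M$, and since $\beta={\bf a}_1+\cdots+{\bf a}_M\in C(A)^\circ$ we already have $M'\le M$. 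Writing a point of ${\mathbb R}^n$ as $u=(u_1,\dots,u_r,v_1,\dots,v_J,w_1,\dots,w_K)$, the description $C(A)^\circ=\{\sum_{i=1}^N\lambda_i{\bf a}_i\mid\lambda_i>0\}$ (valid because $C(A)$ is full-dimensional, so that its interior is exactly the set of strictly positive combinations of the generators) translates, after solving for the coefficients $\lambda_1,\dots,\lambda_n$ of the unit vectors, into the following: $u\in C(A)^\circ$ if and only if there is a $\mu=(\mu_1,\dots,\mu_r)\in{\mathbb R}_{>0}^r$ with $u_s>\mu_s$ for all $s$, with $v_j>C_j(\mu)$ for all $j$, and with $w_k>-D_k(\mu)$ for all $k$.

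Next I would fix such a $\mu$ and minimize $\deg(u)$ over the integer points $u$ satisfying these (coordinatewise) constraints; the minimum is attained at $u_s=\lfloor\mu_s\rfloor+1$, $v_j=\lfloor C_j(\mu)\rfloor+1$, $w_k=\lfloor -D_k(\mu)\rfloor+1$, giving $\deg(u)-M=K+\sum_s\lfloor\mu_s\rfloor+\sum_j\lfloor C_j(\mu)\rfloor-\sum_k\lceil D_k(\mu)\rceil$. Using $\lceil z\rceil=\lfloor z\rfloor+1$ for $z\notin{\mathbb Z}$ and $\lceil z\rceil=\lfloor z\rfloor$ for $z\in{\mathbb Z}$, together with the definition $\Phi(x)=\sum_j\lfloor C_j(x)\rfloor-\sum_k\lfloor D_k(x)\rfloor$, this rearranges into $\deg(u)-M=\sum_s\lfloor\mu_s\rfloor+\Phi(\mu)+\#\{k\mid D_k(\mu)\in{\mathbb Z}\}$, and minimizing over all $\mu\in{\mathbb R}_{>0}^r$ yields
\[ M'-M=\min_{\mu\in{\mathbb R}_{>0}^r}\Bigl(\sum_{s=1}^r\lfloor\mu_s\rfloor+\Phi(\mu)+\#\{k\mid D_k(\mu)\in{\mathbb Z}\}\Bigr). \]
Here $\Phi$ is read as the step function on all of ${\mathbb R}^r$, which by $(7.8)$ is ${\mathbb Z}^r$-periodic, so that "$\Phi\ge 0$ on $[0,1)^r$" is equivalent to "$\Phi\ge 0$ on ${\mathbb R}^r$."

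With the displayed formula in hand both directions become short. If $\Phi\ge 0$ on $[0,1)^r$ then all three summands inside the minimum are $\ge 0$, so $M'\ge M$, and with $M'\le M$ we conclude $M'=M$. Conversely, suppose $\Phi(\nu_0)<0$ for some $\nu_0\in[0,1)^r$. The one point needing an argument is that $\nu_0$ can be replaced by a nearby point which is both interior to the cube and "generic": since each gradient $(c_{j1},\dots,c_{jr})$ and $(d_{k1},\dots,d_{kr})$ is a nonzero vector with nonnegative entries, moving from $\nu_0$ in the all-ones direction strictly increases every $C_j$ and every $D_k$. Hence for small $\epsilon>0$ the point $\nu^{*}=\nu_0+\epsilon(1,\dots,1)$ lies in $(0,1)^r$, satisfies $\lfloor C_j(\nu^{*})\rfloor=\lfloor C_j(\nu_0)\rfloor$ and $\lfloor D_k(\nu^{*})\rfloor=\lfloor D_k(\nu_0)\rfloor$ (so $\Phi(\nu^{*})=\Phi(\nu_0)<0$), and has $D_k(\nu^{*})\notin{\mathbb Z}$ for all $k$. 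Taking $\mu=\nu^{*}$ in the displayed formula, each $\lfloor\nu^{*}_s\rfloor=0$ and the cardinality term vanishes, so $M'-M\le\Phi(\nu_0)<0$ and $M'<M$.

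I expect the real work to lie in the first two paragraphs: making the translation into the system of strict inequalities precise — in particular, justifying that this system cuts out exactly $C(A)^\circ$, which rests on $C(A)$ being full-dimensional so that its interior consists of the strictly positive combinations of the generators — and carrying out the floor/ceiling bookkeeping, where the hypothesis $(7.8)$ is used both to normalize $\deg$ to be constant on $A$ and to make $\Phi$ periodic. The perturbation in the last paragraph, which replaces a minimum of $\Phi$ possibly attained only on a face of the cube by one attained at an interior point at which no $D_k$ is an integer, is the small additional idea that the converse direction needs.
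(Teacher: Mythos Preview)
Your argument is correct. The paper itself does not prove this proposition at all; it merely quotes it as \cite[Theorem~2.1(a)]{AS4}. So you are supplying a self-contained proof where the paper defers to an external reference.

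A few remarks on presentation. Your key structural input is the identification
\[
C(A)^\circ=\Big\{\textstyle\sum_{i=1}^N\lambda_i{\bf a}_i \;\Big|\; \lambda_i>0\Big\},
\]
which you justify by full-dimensionality of $C(A)$. It is worth saying explicitly why $C(A)$ is full-dimensional here: the set $A$ contains all $n$ standard basis vectors, so $C(A)\supseteq{\mathbb R}^n_{\ge 0}$. The statement that the (relative) interior of a finitely generated cone consists exactly of the strictly positive combinations of the generators is standard and your supporting-hyperplane sketch is the right one.

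You assert $\beta\in C(A)^\circ$ at the outset; this is true but not immediate from $\beta=\sum_{j=1}^M{\bf a}_j$, since that sum uses only $M<N$ generators. In fact it drops out of your own description: for any small $\mu\in(0,1)^r$ with $C_j(\mu)<1$ for all $j$, the choice $\lambda_s=1-\mu_s$, $\lambda_{r+j}=1-C_j(\mu)$, $\lambda_{r+J+k}=D_k(\mu)$, $\lambda_{n+s}=\mu_s$ exhibits $\beta$ as a strictly positive combination. (The paper cites \cite[Lemma~2.5]{AS4} for this.) Alternatively, as you implicitly observe, evaluating your displayed formula at small $\mu$ already gives $M'-M\le 0$, so the separate appeal to $\beta$ is not strictly needed.

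The floor/ceiling bookkeeping, the ${\mathbb Z}^r$-periodicity of $\Phi$ (which uses both that the $c_{js},d_{ks}$ are integers and the balancing condition~(7.8)), and the perturbation in the all-ones direction for the converse are all carried out correctly. In the perturbation you use that no $C_j$ or $D_k$ is identically zero, so that $\sum_s c_{js}>0$ and $\sum_s d_{ks}>0$; this is exactly the standing assumption made in the paper before~(7.7).
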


Thus Landau's criterion may be restated as follows.
\begin{theorem}
Assume that (7.8) holds.  One has $E(m)\in{\mathbb N}$ for all $m\in{\mathbb N}^r$ if and only if (7.17) holds.
\end{theorem}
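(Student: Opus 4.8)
The plan is to obtain the stated equivalence by simply composing the two results recalled immediately before it, so that no new estimate is needed. First I would quote Landau's theorem above: under hypothesis~(7.8), one has $E(m)\in{\mathbb N}$ for all $m\in{\mathbb N}^r$ if and only if the step function $\Phi$ satisfies $\Phi(x)\geq 0$ for every $x\in[0,1)^r$. Next I would quote the Proposition just recalled from \cite{AS4} (its Theorem~2.1(a)): under the same hypothesis~(7.8), the inequality $\Phi(x)\geq 0$ for all $x\in[0,1)^r$ holds if and only if~(7.17) holds. Chaining the two biconditionals yields the equivalence ``$E(m)\in{\mathbb N}$ for all $m\in{\mathbb N}^r$ if and only if~(7.17)'', which is the theorem. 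There is essentially nothing to overcome here; one need only note that~(7.8) is the common standing hypothesis of all three statements, so that the equivalences are being composed over the same family of data.

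For the record I would also indicate the alternative route to the implication ``(7.17)$\,\Longrightarrow\,E(m)\in{\mathbb N}$ for all $m$'' --- the half of Landau's theorem that this paper is designed to reprove and extend --- which goes through Theorem~2.13 rather than \cite{L}. One checks that $\beta=\sum_{j=1}^M{\bf a}_j$ is an interior point of $C(A)$, so $\sigma_\beta=C(A)$; by the note following Theorem~2.10, condition~(7.17) already forces $A'$ to be minimal for $C(A)^\circ$; and hypothesis~(7.8) says exactly that $\sum_{i=1}^n a_{ij}=1$ for every~$j$, so all ${\bf a}_j$ lie on the hyperplane $u_1+\cdots+u_n=1$ and, by the Remark at the end of Section~2, Theorem~2.13 applies after the linear change of variables carrying this hyperplane to $u_n=1$ (in the original coordinates ``$-u_n$'' reads as ``$-(u_1+\cdots+u_n)$''). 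Since adjoining the side condition $F_u(\Lambda)\neq 0$ can only raise the minimum in~(7.17), while $u=-\beta$ shows it is not raised (the $m=0$ term of~(7.13) gives $F_{-\beta}(\Lambda)\neq 0$ and $\sum_i\beta_i=M$), condition~(7.17) implies the hypothesis of Theorem~2.13; that theorem then gives that all $F_u(\Lambda)$, in particular $F_{-\beta}(\Lambda)$, have integral coefficients, and the expansion~(7.13) turns this into the integrality of every $E(m)$. The one place I expect to need care is exactly this last matching step --- verifying that the minimum in~(7.17) is unaffected by the constraint $F_u(\Lambda)\neq 0$; for the reverse implication ``$E(m)\in{\mathbb N}$ for all $m\,\Longrightarrow\,$(7.17)'' I would not attempt to avoid \cite{L} but would simply invoke Landau's theorem together with the Proposition, as in the first paragraph.
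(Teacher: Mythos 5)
Your first paragraph is exactly what the paper does: Theorem~7.18 is obtained simply by chaining Theorem~7.15 (Landau) with Proposition~7.16 over the common standing hypothesis~(7.8), and the paper gives no further argument beyond ``Thus Landau's criterion may be restated as follows.'' Your second paragraph's alternative route to the forward implication via Theorem~2.13 (checking $\beta\in C(A)^\circ$, the hyperplane reduction, and that $F_{-\beta}\neq 0$ so the side condition does not raise the minimum) is also correct and is essentially the content of the paper's Proposition~7.19, which strengthens that direction to integrality of all the $F_u(\Lambda)$.
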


Under assumption (7.8) the elements of the set $A$ all lie on the hyperplane $\sum_{i=1}^n u_i =1$ in ${\mathbb R}^n$, so the remark at the end of Section~2 implies that Theorem~2.13 may be applied to this situation.  It follows from \cite[Lemma 2.5]{AS4} that $\beta\in C(A)^\circ$.  By Theorem 2.13 we then have the following result.
\begin{proposition}
Assume that (7.8) holds.  If
\[ M = \min\bigg\{\sum_{i=1}^n u_i \mid u=(u_1,\dots,u_n)\in C(A)^\circ\cap{\mathbb Z}^n\bigg\} \]
(or equivalently if $\Phi(x)\geq 0$ for all $x\in [0,1)^r$), then the series $F_u(\Lambda)$ for $u\in\big(-C(A)^\circ\big)\cap{\mathbb Z}^n$ all have integral coefficients.
\end{proposition}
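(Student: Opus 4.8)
The plan is to deduce the statement directly from Theorem~2.13, applied to the datum $(A,A',\beta)$ built from the $C_j$ and $D_k$; the work is entirely in verifying the hypotheses and matching up the relevant functionals. First, every ${\bf a}_j$ has coordinate sum $1$: this is immediate for the standard basis vectors ${\bf a}_1,\dots,{\bf a}_n$, and for ${\bf a}_{n+s}$ it follows from the balancing condition $\sum_{j=1}^J c_{js} = \sum_{k=1}^K d_{ks}$. Hence $A$ lies on the hyperplane $\sum_{i=1}^n u_i = 1$, so by the Remark at the end of Section~2 the conclusion of Theorem~2.13 is available provided its hypotheses hold, now with the role of ``$-u_n$'' played by $-\sum_{i=1}^n u_i$, i.e., by minus the degree: under the coordinate change $T$ of that Remark the last coordinate of $T(u)$ equals $\sum_{i=1}^n u_i$, and each $F_u(\Lambda)$ is unchanged. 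That $\beta = \sum_{j=1}^M{\bf a}_j = (1,\dots,1,0,\dots,0)$ (with $M$ ones) lies in $C(A)^\circ$ is \cite[Lemma~2.5]{AS4}, and since ${\bf a}_1,\dots,{\bf a}_n$ span $\mathbb{Z}^n$ we have $\mathbb{Z}A = \mathbb{Z}^n$, so $C(A)^\circ\cap\mathbb{Z}A = C(A)^\circ\cap\mathbb{Z}^n$ and $\mathcal{M} = \bigl(-C(A)^\circ\bigr)\cap\mathbb{Z}^n$.

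It remains to translate the numerical hypothesis of Theorem~2.13 into the present language. Writing $u = -v$ with $v\in C(A)^\circ\cap\mathbb{Z}^n$, one has $-\sum_i u_i = \sum_i v_i$, so the hypothesis reads $M = \min\{\sum_i v_i \mid v\in C(A)^\circ\cap\mathbb{Z}^n \text{ and } F_{-v}(\Lambda)\neq 0\}$. I would derive this from the assumed equality $M = \min\{\sum_i u_i \mid u\in C(A)^\circ\cap\mathbb{Z}^n\}$ (equivalent by \cite[Theorem~2.1(a)]{AS4} to $\Phi\geq 0$ on $[0,1)^r$) as follows: the minimum in question is taken over a subset of $C(A)^\circ\cap\mathbb{Z}^n$, hence is $\geq M$; conversely, for $v = \beta$ the index $l = 0$ is admissible in (2.2) --- since $\sum_{j=1}^M(-1){\bf a}_j = -\beta$ --- and contributes the nonzero monomial $\Lambda_1^{-1}\cdots\Lambda_M^{-1}$, so $F_{-\beta}(\Lambda)\neq 0$, and as $\sum_i\beta_i = M$ the minimum is also $\leq M$. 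Thus it equals $M$, the hypotheses of Theorem~2.13 hold, and the theorem (in the form of the Remark) gives that $F_u(\Lambda)$ has integral coefficients for every $u\in\mathcal{M} = \bigl(-C(A)^\circ\bigr)\cap\mathbb{Z}^n$, which is the claim.

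There is no serious obstacle beyond bookkeeping. The two points that need care are the identification, under the change of coordinates of the Remark, of the ``last coordinate'' functional appearing in Theorem~2.13 with the degree functional $\sum_{i=1}^n u_i$ that is natural to Landau's setting, and the (unconditional) nonvanishing $F_{-\beta}(\Lambda)\neq 0$, which is exactly what allows one to replace the purely combinatorial minimum of \cite[Theorem~2.1(a)]{AS4} by the $F$-restricted minimum occurring in the hypothesis of Theorem~2.13.
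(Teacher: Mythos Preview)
Your proof is correct and follows the same route as the paper: invoke Theorem~2.13 via the Remark at the end of Section~2, after noting that the ${\bf a}_j$ lie on the hyperplane $\sum_i u_i = 1$ and that $\beta \in C(A)^\circ$ by \cite[Lemma~2.5]{AS4}. Your argument is in fact more detailed than the paper's terse deduction, since you explicitly verify $\mathbb{Z}A=\mathbb{Z}^n$ and show (via the nonvanishing $F_{-\beta}(\Lambda)\neq 0$) that the constraint ``$F_u(\Lambda)\neq 0$'' appearing in the hypothesis of Theorem~2.13 does not alter the minimum.
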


We give an explicit example.  Consider the ratios
\begin{equation}
 \frac{(30m)!m!}{(15m)!(10m)!(6m)!}\quad\text{for $m\in{\mathbb N}$.} 
\end{equation}
By the above discussion they correspond to the set $A=\{{\bf a}_j\}_{j=1}^7\subseteq{\mathbb R}^6$, where ${\bf a}_1,\dots,{\bf a}_6$ are the standard unit basis vectors and
\[ {\bf a}_7 = (1,30,1,-15,-1,-6), \]
and where $\beta = (1,1,1,0,0,0)\in C(A)^\circ$.  One checks that Landau's criterion holds, hence the ratios (7.20) are integral for all $m\in{\mathbb N}$.  By Proposition~7.16 we have
\begin{equation}
3 =  \min\bigg\{\sum_{i=1}^6 u_i \mid u=(u_1,\dots,u_6)\in C(A)^\circ\cap{\mathbb Z}^6\bigg\}
\end{equation}
(this can also be verified by a direct calculation from the set $A$).  It follows from Proposition~7.19 that all the series $F_u(\lambda)$ for $u\in \big(-C(A)^\circ\big)\cap{\mathbb Z}^n$ have integral coefficients.

The choice $u=-\beta$ gives the ratios (7.20).  One can check that 
\[ u=(1,7,1,-3,-2,-1)\in \big(-C(A)^\circ\big)\cap{\mathbb Z}^n. \]
One computes the coefficients of $F_{-u}(\Lambda)$ to be
\[ \frac{(30m+6)!m!}{(15m+3)!(10m+2)!(6m+1)!}, \]
which are integral by Proposition 7.19.  As another exanple, one can check that
\[ u=(1,29,1,-14,-9,-5)\in \big(-C(A)^\circ\big)\cap{\mathbb Z}^n, \]
for which one computes the coefficients of $F_{-u}(\Lambda)$ to be 
\[ \frac{(30m+28)!m!}{(15m+14)!(10m+9)!(6m+5)!}. \]
Again, these are integral by Proposition 7.19.

\end{document}